\documentclass{article}
\pdfoutput=1
\usepackage{spconf,amsmath,graphicx}
\usepackage{amsfonts}       % blackboard math symbols
\usepackage{amsmath,amsthm,amssymb}
\usepackage{color}
\usepackage{bm}

\newtheorem{theorem}{Theorem}

\newtheorem{proposition}{Proposition}
\newtheorem{fact}{Fact}
\newtheorem{definition}{Definition}
\usepackage{balance}

\title{Finite sample deviation and variance bounds for \linebreak first order autoregressive processes}

\name{Rodrigo A. Gonz\'alez and Cristian R. Rojas \thanks{This work was supported by the Swedish Research Council under contract number 2016-06079 (NewLEADS).
		E-mails: $\{grodrigo@kth.se, crro@kth.se\}$.}}
\address{Division of Decision and Control Systems\\KTH Royal Institute of Technology, Stockholm, Sweden }

\begin{document}
\ninept
\maketitle

\begin{abstract}
In this paper, we study finite-sample properties of the least squares estimator in first order autoregressive processes. By leveraging a result from decoupling theory, we derive upper bounds on the probability that the estimate deviates by at least a positive $\varepsilon$ from its true value. Our results consider both stable and unstable processes. Afterwards, we obtain problem-dependent non-asymptotic bounds on the variance of this estimator, valid for sample sizes greater than or equal to seven. Via simulations we analyze the conservatism of our bounds, and show that they reliably capture the true behavior of the quantities of interest.
\end{abstract}

\begin{keywords}
Autoregressive Processes, Non-Asymptotic Estimation, Least Squares, Finite Sample Analysis.
\end{keywords}

\section{Introduction}
\label{sec:intro}

Identification of autoregressive (AR) processes is a well-developed research topic, which is deeply connected with the areas of probability and statistics, time series analysis, econometrics, system identification, signal processing, machine learning and control. 

In AR processes, estimation of parameters given data is commonly done by the least squares method, which is known to have good statistical performance and is related to maximum likelihood in a Gaussian noise framework. Consequently, the asymptotic properties of this method for AR processes have been studied thoroughly, starting since at least \cite{mann1943statistical}. In general regression models, a detailed consistency analysis of the least squares estimate was given in \cite{lai1983asymptotic}.

Despite the popularity of least squares, non-asymptotic performance bounds with this method are rare in the literature. Finite-time bounds are important for computing the number of samples needed for achieving a specified accuracy, deriving finite-time confidence sets, and designing robust control schemes. Most of the difficulties in obtaining these bounds lie on the limitations of classical statistical techniques, which are better suited for asymptotic analyses.

Recently, results in statistical learning theory, self-normalizing processes \cite{pena2008self} and high dimensional probability \cite{wainwright2019high}, have motivated control, signal processing, and machine learning communities to explore finite-time properties of linear system estimates by least squares.
Among topics of interest, we can find sample complexity bounds \cite{jedra2019sample}, $1-\delta$ probability bounds on parameter errors \cite{sarkar2019near}, confidence bounds \cite[Chap. 20]{lattimore2018bandit}, Cram\'{e}r-Rao lower bounds \cite{friedlander1989exact}, and bounds on quadratic criterion cost deviation \cite{weyer1999finite,campi2002finite}. In these works general autoregressive structures often appear, but the powerful statistical machinery used usually leads to loss of insight in scalar cases, where specialized theory could serve better. A problem-dependent sharp analysis of AR(1) processes is therefore needed, as it can be considered as a natural stepping stone towards studying finite-time properties of more general model structures such as ARX and ARMAX models, and can also be useful for analyzing finite-time properties of two-stage ARMA estimation algorithms \cite{stoica2005spectral}.

In summary, the main results of this paper are:

\begin{itemize}
\item
For both stable and unstable single-parameter scalar autoregressive processes, we derive upper bounds on the deviation probability of the least-squares estimate;

\item We provide interpretable upper bounds on the variance of this estimated parameter for both aforementioned scenarios; 

\item We show that our variance bound for the stable case matches the well-known asymptotic linear decay; and for unstable systems, the variance bound decays exponentially on the number of data points, with rate of decay proportional to the magnitude of the unstable pole;

\item
We illustrate the applicability of these bounds via Monte Carlo simulations, and show that they manifest the correct qualitative behavior of deviation probability and variance.
\end{itemize}

The rest of this paper is organized as follows. We put our work into context in Section \ref{sec:prior}, and explicitly formulate the problem in Section \ref{sec:problemformulation}. Preliminaries from decoupling theory are introduced in Section \ref{sec:preliminaries}. After this, we state and prove our non-asymptotic bounds for stable and unstable processes in Sections \ref{sec:stable} and \ref{sec:unstable} respectively. Simulations and conclusions are presented in Sections \ref{sec:simulations} and \ref{sec:conclusions}.

%ICASSP Version

%The rest of this paper is organized as follows. We put our work into context in Section \ref{sec:prior}, and explicitly formulate the problem in Section \ref{sec:problemformulation}. Preliminaries from decoupling theory are introduced in Section \ref{sec:preliminaries}. After this, we state and prove our non-asymptotic bounds for stable and unstable processes in Sections \ref{sec:stable} and \ref{sec:unstable} respectively\footnote{Supplementary material regarding steps in our derivations are available in the preprint version of this paper \cite{gonzalez2019finite}.}. 
%Simulations and conclusions are presented in Sections \ref{sec:simulations} and \ref{sec:conclusions}.

\section{RELATION TO PRIOR WORK}
\label{sec:prior}
In signal processing, Cram\'{e}r-Rao lower bounds for finite number of measurements were studied in \cite{friedlander1989exact}. In a time-series context, \cite{bercu1997large,bercu2001large} studied large deviation rates of the least squares estimator in an AR(1) process. In adaptive control,
a bound on the estimation error of the parameter of a scalar first order system was given in \cite{rantzer2018concentration}, together with an upper bound on the variance of the estimate. Although similar to our work, these bounds do not depend on the real system parameter. This leads to a) conservatism, as usually the worst-case-scenario is taken into account, and b) lack of insight on the effect of stability margins in the accuracy of the estimate. A deviation bound for finite time that is also problem independent was given in \cite{bercu2008exponential}. 

In a broader context, one of the first non-asymptotic results in system identification was presented in \cite{campi2002finite}, where a uniform bound for the difference between empirical and theoretical identification costs was obtained. More recently, among works that have analyzed finite-time identification for stochastic processes are \cite{sarkar2019near,simchowitz2018learning,faradonbeh2018finite,tsiamis2019finite}. These papers have focused on vector autoregressive models described by matrix parameters in the form of state-space realizations, which do not reduce to interpretable and tight bounds for scalar cases. Only \cite{simchowitz2018learning} studied a scalar system separately, and derived probability $1-\delta$ error guarantees depending on the real parameter value. However, as opposed to our bounds, these results are rather difficult to interpret, and no variance bound is obtained. 

\section{Problem formulation}
\label{sec:problemformulation}
Consider the AR(1) process
\begin{equation}
\label{ar1}
y_t = a_0 y_{t-1}+e_t,
\end{equation}
where $\{e_t\}$ is a Gaussian white noise process of variance $\sigma^2$, and $a_0$ is unknown. Given $N$ data points $\{y_t\}_{t=1}^N$, the least squares estimate of $a_0$ \cite[Chap. 3]{stoica2005spectral} is
\begin{equation}
\label{ls1}
\hat{a}_N := \dfrac{\sum\limits_{t=2}^N y_t y_{t-1}}{\sum\limits_{t=1}^{N-1} y_{t}^2} = a_0 + \dfrac{\sum\limits_{t=2}^N e_t y_{t-1}}{\sum\limits_{t=1}^{N-1} y_{t}^2}.
\end{equation}
It is challenging to derive an analytical expression for the distribution of the random variable $\hat{a}_N-a_0$. Therefore, instead of exact expressions, it is of interest to obtain an upper bound for the deviation of $\hat{a}_N$ around the true value $a_0$ for finite samples. Due to the symmetry of the distribution, our interest lies on upper bounding
\begin{equation}
\mathbb{P}(\hat{a}_N-a_0>\varepsilon) \notag
\end{equation}
for any positive and fixed $\varepsilon$. Provided this probability bound, the next natural step is to compute an upper bound on the variance of least squares for finite samples in this scenario, which is defined as
\begin{equation}
\textnormal{var}\{\hat{a}_N\} = \mathbb{E}[(\hat{a}_N-a_0)^2]. \notag
\end{equation} 
The goal of this paper is to compute both bounds under two different regimes: stable $(|a_0|<1)$ and strictly unstable ($|a_0|>1$). For this purpose, we make use of an exponential inequality for martingales, which we detail next.

\section{Preliminaries: Theory of decoupling}
\label{sec:preliminaries}
In this section we give a brief summary of decoupling theory, mostly based on \cite{victor1999general}. Decoupling theory provides an approach to handling complex problems were dependent variables are involved. By breaking the dependence structure, it is possible to tackle inequalities with greater ease. Before presenting the result we have used, we state two important definitions.

\begin{definition}[$\{\mathcal{F}_i\}$-tangent sequence]
Let $\{d_i\}_{i\in \mathbb{N}}$, $\{e_i\}_{i\in \mathbb{N}}$, be two sequences of random variables adapted to an increasing sequence of $\sigma$-fields $\mathcal{F}_i$, and assume that $\mathcal{F}_0$ is the trivial $\sigma$-field. Then $\{e_i\}$ is said to be $\{\mathcal{F}_i\}$-tangent to $\{d_i\}$ if for all $i$,
\begin{equation}
p(d_i|\mathcal{F}_{i-1}) = p(e_i|\mathcal{F}_{i-1}), \notag
\end{equation} 
where $p(d_i|\mathcal{F}_{i-1})$ denotes the conditional probability law of $d_i$ given $\mathcal{F}_{i-1}$.
\end{definition}  

\begin{definition}[Condition CI]
A sequence $\{e_i\}$ of random variables adapted to an increasing sequence of $\sigma$-fields $\mathcal{F}_i$ contained in $\mathcal{F}$ is said to satisfy \textnormal{condition CI} if there exists a $\sigma$-algebra $\mathcal{G}$ contained in $\mathcal{F}$ such that $\{e_i\}$ is a sequence of conditionally independent random variables given $\mathcal{G}$ and
\begin{equation}
p(e_i|\mathcal{F}_{i-1})= p(e_i|\mathcal{G}) \notag
\end{equation} 
for all $i$. The sequence $\{e_i\}$ is then said to be \textnormal{decoupled}.
\end{definition}
It is known \cite{de2012decoupling} that there always exists a sequence $\{d_i\}_{i=1}^n$ of conditionally independent random variables given a master $\sigma$-algebra $\mathcal{G}$ (where usually $\mathcal{G}=\sigma(\{e_i\})$) that has the same individual marginals as the original sequence. In particular, $d_i$ has the same conditional distribution as $e_i$, for all $i=1,\dots, n$, since it can be seen as drawn from independent repetitions of the same random mechanism. This construction can be made sequentially \cite{victor1999general}. 

We now are ready to state the key Proposition used in this work.

\begin{proposition}
	\label{proposition1}\textnormal{\cite[Corollary 3.1]{victor1999general}}
Let $\{d_i\},\{e_i\}$ be $\{\mathcal{F}_i\}$-tangent. Assume that $\{e_i\}$ is decoupled (i.e., condition CI is satisfied). Let $g\geq 0$ be any random variable measurable with respect to $\sigma(\{d_i\}_{i=1}^\infty)$. Then for all finite $t$,
\begin{equation}
\mathbb{E}\left[ g \exp\left\{ t\sum_{i=1}^n d_i \right\}  \right] \leq \sqrt{\mathbb{E}\left[ g^2 \exp\left\{ 2t\sum_{i=1}^n e_i \right\}  \right]}. \notag
\end{equation} 
\end{proposition}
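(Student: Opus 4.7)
My plan is to prove this classical decoupling inequality by combining Cauchy--Schwarz with the tangency and CI structure, organized around the predictable compensator of the $d$-exponential. Define $u_i := \mathbb{E}[\exp(t d_i) \mid \mathcal{F}_{i-1}]$ and $V_n := \prod_{i=1}^{n} u_i$. By tangency $u_i = \mathbb{E}[\exp(t e_i) \mid \mathcal{F}_{i-1}]$, and by condition CI this also equals $\mathbb{E}[\exp(t e_i) \mid \mathcal{G}]$. Consequently $V_n$ is simultaneously $\mathcal{F}_{n-1}$-measurable and $\mathcal{G}$-measurable, and $L_n := \exp(t\sum_{i=1}^n d_i)/V_n$ is a positive mean-one $\{\mathcal{F}_i\}$-martingale.

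With this setup, the right-hand side admits a clean lower bound of the form $\mathbb{E}[g^2 V_n^2]$. Conditional Jensen gives $u_i^2 = (\mathbb{E}[\exp(t e_i) \mid \mathcal{G}])^2 \leq \mathbb{E}[\exp(2t e_i) \mid \mathcal{G}]$, and the conditional independence of the $e_i$ given $\mathcal{G}$ converts the product into a joint conditional MGF,
\[
V_n^2 \leq \prod_{i=1}^n \mathbb{E}[\exp(2t e_i) \mid \mathcal{G}] = \mathbb{E}\bigl[\exp\!\bigl(2t{\textstyle\sum_{i=1}^n} e_i\bigr) \bigm| \mathcal{G}\bigr].
\]
Since $g$ is $\mathcal{G}$-measurable, multiplying by $g^2$ and integrating yields $\mathbb{E}[g^2 V_n^2] \leq \mathbb{E}[g^2 \exp(2t \sum_{i=1}^n e_i)]$.

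The remaining task is to establish the companion upper bound $(\mathbb{E}[g \exp(t\sum_{i=1}^n d_i)])^2 \leq \mathbb{E}[g^2 V_n^2]$, after which combining the two inequalities and taking square roots completes the proof. I would proceed by induction on $n$: at each stage, condition on $\mathcal{F}_{n-1}$, apply a conditional Cauchy--Schwarz that peels off the martingale increment $\exp(t d_n)/u_n$, and feed the resulting $\mathcal{F}_{n-1}$-measurable residual into the inductive hypothesis at level $n-1$.

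The key difficulty, which I expect to be the main obstacle, lies precisely in this inductive Cauchy--Schwarz. A naive conditional Cauchy--Schwarz applied to $\mathbb{E}[g \exp(t d_n) \mid \mathcal{F}_{n-1}]$ produces the second-moment MGF $\mathbb{E}[\exp(2t d_n) \mid \mathcal{F}_{n-1}]$, which by Jensen is at least $u_n^2$, so the resulting predictable factor is too large and the inductive bound does not close against $\mathbb{E}[g^2 V_n^2]$. Circumventing this requires a refined pairing of $\mathcal{G}$-measurable and $\mathcal{F}$-predictable quantities that exploits the CI hypothesis beyond its use in the right-hand-side bound; this refined pairing is the technical core of the decoupling theorem in~\cite{victor1999general}, and is what must be adapted to close the induction here.
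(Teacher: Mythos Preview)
The paper does not prove Proposition~\ref{proposition1}; it is quoted verbatim as a preliminary result from \cite{victor1999general}. So there is no ``paper's own proof'' to compare against, only the original source.

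Your attempt contains a genuine gap, and you have in fact located it yourself. The difficulty is not some subtle ``refined pairing'' that must be imported from the reference; it is that you built the argument around the wrong compensator. You work with $V_n=\prod_i \mathbb{E}[\exp(td_i)\mid\mathcal F_{i-1}]$ and then need $(\mathbb{E}[g\exp(t\sum d_i)])^2\le \mathbb{E}[g^2V_n^2]$. As you note, any Cauchy--Schwarz that peels off $\exp(td_n)$ produces the second-moment factor $\mathbb{E}[\exp(2td_n)\mid\mathcal F_{n-1}]$, which dominates $u_n^2$, so the induction cannot close against $V_n^2$.

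The repair is simple: skip $V_n$ entirely and use the compensator at parameter $2t$. Set
\[
W_n:=\prod_{i=1}^n \mathbb{E}\bigl[\exp(2td_i)\bigm|\mathcal F_{i-1}\bigr].
\]
By tangency and CI, each factor equals $\mathbb{E}[\exp(2te_i)\mid\mathcal G]$, so $W_n=\mathbb{E}[\exp(2t\sum_i e_i)\mid\mathcal G]$ exactly, by conditional independence. Since $g$ is $\mathcal G$-measurable (in the standard construction $\mathcal G\supseteq\sigma(\{d_i\})$), this gives $\mathbb{E}[g^2\exp(2t\sum e_i)]=\mathbb{E}[g^2W_n]$ as an \emph{equality}, not merely an inequality. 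On the other side, a single Cauchy--Schwarz with the split $g\exp(t\sum d_i)=(g\sqrt{W_n})\cdot(\exp(t\sum d_i)/\sqrt{W_n})$ yields
\[
\Bigl(\mathbb{E}\bigl[g\exp\!\bigl(t{\textstyle\sum} d_i\bigr)\bigr]\Bigr)^2 \le \mathbb{E}\bigl[g^2W_n\bigr]\cdot \mathbb{E}\Bigl[\frac{\exp(2t\sum d_i)}{W_n}\Bigr],
\]
and the second factor equals $1$ because $\exp(2t\sum_{i\le n} d_i)/W_n$ is a mean-one $\{\mathcal F_i\}$-martingale by construction of $W_n$. No induction is needed; the proof closes in one step. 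Your Jensen observation $V_n^2\le W_n$ is correct but irrelevant once $W_n$ is used directly.
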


\section{Bounds for stable AR(1) processes}
\label{sec:stable}
In this section, we derive the first key result of this paper, Theorem \ref{theorem1}, which provides deviation probability and variance bounds for a stable first order autoregressive process and stationary data $\{y_t\}_{t=1}^N$. We do this by exploiting Proposition \ref{proposition1}.

\begin{theorem} \label{theorem1}
	Consider the AR(1) process described by \eqref{ar1}, and the least-squares estimate \eqref{ls1}. Assume that $\{y_t\}$ is stationary and $|a_0|<1$. Then, for all $\varepsilon>0$ and sample size $N\geq 2$,
	\begin{align}
	\mathbb{P}(&\hat{a}_N-a_0>\varepsilon)\leq \left(\frac{1-a_0^2}{1-a_0^2+\varepsilon^2}\right)^{\frac{1}{4}} \times \notag \\
	\label{pbstable1}
	&\left(\frac{1+a_0^2 + \varepsilon^2}{2}+\sqrt{\left(\frac{1+a_0^2 + \varepsilon^2}{2}\right)^2-a_0^2} \right)^{\frac{-(N-2)}{4}}.
	\end{align}
	Furthermore, for all $N\geq 7$,
	\begin{equation}
	\label{varstable1}
	\textnormal{var}\{\hat{a}_N\}\leq \frac{8}{N-6}-\frac{8a_0^2}{N+2}.
	\end{equation}
\end{theorem}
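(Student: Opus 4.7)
The plan is to obtain \eqref{pbstable1} via a Chernoff-plus-decoupling argument that collapses to a Gaussian quadratic-form moment generating function, and then derive \eqref{varstable1} by integrating the resulting tail bound. For the probability bound, fix $\lambda>0$ and apply Markov's inequality to write $\mathbb{P}(\hat{a}_N - a_0 > \varepsilon) = \mathbb{P}\bigl(\sum_{t=2}^{N} e_t y_{t-1} - \varepsilon \sum_{t=1}^{N-1} y_t^2 > 0\bigr) \leq \mathbb{E}[g\,\exp\{\lambda \sum_{t=2}^N d_t\}]$, where $d_t = e_t y_{t-1}$ and $g = \exp\{-\lambda\varepsilon \sum_{t=1}^{N-1} y_t^2\}$. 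With filtration $\mathcal{F}_t = \sigma(y_0,e_1,\dots,e_t)$, I would construct the tangent decoupled sequence $\tilde d_t = \tilde e_t y_{t-1}$ with $\tilde e_t$ i.i.d.\ $\mathcal{N}(0,\sigma^2)$ independent of $\{y_t\}$, so that condition CI holds with $\mathcal{G} = \sigma(\{y_t\})$. Applying Proposition~\ref{proposition1} and conditioning on $\mathcal{G}$ to evaluate the Gaussian inner expectation $\mathbb{E}[\exp\{2\lambda \sum \tilde d_t\}\mid \mathcal{G}] = \exp\{2\lambda^2\sigma^2 \sum y_{t-1}^2\}$ yields $\mathbb{P}(\hat a_N - a_0 > \varepsilon) \leq \sqrt{\mathbb{E}[\exp\{(2\lambda^2\sigma^2 - 2\lambda\varepsilon)\sum y_{t-1}^2\}]}$. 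Minimizing in $\lambda$ gives $\lambda^{\star} = \varepsilon/(2\sigma^2)$ with negative exponent coefficient $-\varepsilon^2/(2\sigma^2)$.

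Under stationarity, $(y_1,\ldots,y_{N-1})$ is $\mathcal{N}(0,\sigma^2 T/(1-a_0^2))$ with $T_{ij} = a_0^{|i-j|}$, so the Gaussian quadratic-form moment generating function equals $\det(I + \varepsilon^2 T/(1-a_0^2))^{-1/2}$ and the probability bound reduces to $\det(I + \varepsilon^2 T/(1-a_0^2))^{-1/4}$. I would then exploit that $T^{-1}$ is tridiagonal with interior diagonal $1+a_0^2$, corner diagonal $1$, and off-diagonal $-a_0$ (all scaled by $1/(1-a_0^2)$), so the determinant satisfies a three-term recurrence whose characteristic polynomial is $z^2 - (1+a_0^2+\varepsilon^2)z + a_0^2$, with roots the $\lambda_\pm$ appearing in~\eqref{pbstable1}. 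Solving this recurrence and matching boundary conditions gives an expression of the form $[(c+\Delta)^2 \lambda_+^{N-2} - (\Delta - c)^2 \lambda_-^{N-2}]/[4\Delta(1-a_0^2)]$ where $c = 1-a_0^2+\varepsilon^2$ and $\Delta = \lambda_+ - \lambda_-$. Using the algebraic identity $(c+\Delta)^2 - 4c\Delta = (c-\Delta)^2$, the required lower bound $\det \geq c \lambda_+^{N-2}/(1-a_0^2)$ reduces to the elementary inequality $\lambda_+^{N-2} \geq \lambda_-^{N-2}$, which is immediate from $\lambda_+ \geq \lambda_- \geq 0$. Taking the $-1/4$ power produces exactly~\eqref{pbstable1}.

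For the variance bound~\eqref{varstable1}, the same Chernoff-decoupling step applied to $-(\hat{a}_N - a_0)$, which works because $e_t$ and $-e_t$ have the same Gaussian conditional law, gives $\mathbb{P}(|\hat a_N - a_0| > \varepsilon) \leq 2B(\varepsilon)$ with $B(\varepsilon)$ the right-hand side of~\eqref{pbstable1}. Hence $\mathrm{var}(\hat a_N) = \int_0^\infty 2\varepsilon\,\mathbb{P}(|\hat a_N - a_0| > \varepsilon)\,d\varepsilon \leq 4\int_0^\infty \varepsilon B(\varepsilon)\,d\varepsilon$. Evaluating the characteristic polynomial at $z = 1+\varepsilon^2$ gives value $-a_0^2\varepsilon^2 \leq 0$, which together with $\lambda_+ \geq \lambda_- \geq 0$ yields the clean lower bound $\lambda_+ \geq 1+\varepsilon^2$; substituting $u = \varepsilon^2$ then reduces the problem to bounding $\int_0^\infty (1-a_0^2+u)^{-1/4}(1+u)^{-(N-2)/4}\,du$. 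The hypothesis $N \geq 7$ ensures $(N-2)/4 > 1$, so that $\int_0^\infty \varepsilon(1+\varepsilon^2)^{-p}\,d\varepsilon = 1/[2(p-1)]$ converges and yields $2/(N-6)$ for $p = (N-2)/4$ and $2/(N+2)$ for $p = (N+6)/4$. The main technical obstacle is this final integration: one must split the first factor $((1-a_0^2)/(1-a_0^2+\varepsilon^2))^{1/4}$ carefully, for instance via the identity $(1-a_0^2)(1+\varepsilon^2) = (1-a_0^2+\varepsilon^2) - a_0^2\varepsilon^2$ combined with concavity of $x \mapsto x^{1/4}$, so that the integrand separates into a dominant piece producing $8/(N-6)$ and an $a_0^2$-weighted correction producing $-8a_0^2/(N+2)$, rather than the cruder bound $8/(N-6)$ obtained by dropping the first factor entirely.
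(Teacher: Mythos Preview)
Your derivation of the probability bound \eqref{pbstable1} is correct and in fact gives a pleasant alternative to the paper's route. The Chernoff--decoupling portion is identical to the paper, arriving at the bound $\det\bigl(I_{N-1}+\tfrac{\varepsilon^2}{\sigma^2}\mathbf{R}_{N-1}\bigr)^{-1/4}$. From there the paper lower-bounds the determinant by invoking the monotonicity of the ratios $\det(\mathbf{T}_{n+1})/\det(\mathbf{T}_n)$ together with Szeg\H{o}'s formula for the limiting ratio; you instead exploit the tridiagonal structure of $T^{-1}$ to obtain a closed-form solution of the three-term recurrence and then reduce the needed inequality to $\lambda_+^{N-2}\ge\lambda_-^{N-2}$. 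Your argument is more elementary (no spectral-factorisation computation) and gives the exact determinant, whereas the paper's argument is more conceptual and generalises to any stationary spectrum.

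The variance step, however, contains a genuine gap. Once you replace $\lambda_+$ by the lower bound $1+\varepsilon^2$, all dependence of the second factor on $a_0$ is lost, and the $-8a_0^2/(N+2)$ correction can no longer be recovered from the prefactor $\bigl(\tfrac{1-a_0^2}{1-a_0^2+\varepsilon^2}\bigr)^{1/4}$ by the concavity trick you propose: a short calculation shows that for large $N$ the resulting correction is only of order $1/N^2$, which is strictly smaller than $8a_0^2/(N+2)$. The paper proceeds differently: it \emph{drops} the prefactor (which is $\le 1$) and then integrates $2\int_0^\infty \lambda_+(x)^{-(N-2)/4}\,dx$ \emph{exactly}, via the substitution $u=\lambda_+(x)$, for which $x=u+a_0^2/u-1-a_0^2$ and hence $dx=(1-a_0^2/u^2)\,du$ with lower limit $u=1$. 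The Jacobian factor $1-a_0^2/u^2$ is precisely what produces the two terms
\[
2\int_1^\infty u^{-(N-2)/4}\,du-2a_0^2\int_1^\infty u^{-(N-2)/4-2}\,du=\frac{8}{N-6}-\frac{8a_0^2}{N+2}.
\]
In other words, the $a_0^2$-correction in \eqref{varstable1} originates from the exact shape of $\lambda_+(x)$, not from the prefactor; your bound $\lambda_+\ge 1+\varepsilon^2$ throws away exactly the information that yields it.
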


\begin{proof}[Proof sketch]
 Given $\varepsilon>0$, we have that
	\begin{align}
	\mathbb{P}(\hat{a}_N-a_0>\varepsilon) &= \mathbb{P}\left(\sum_{t=2}^N e_t y_{t-1}-\varepsilon \sum_{t=1}^{N-1} y_{t}^2 > 0\right) \notag \\
	&\leq \inf_{\lambda>0} \mathbb{E}\left[ \exp\left\{ \lambda \sum_{t=2}^N (e_t y_{t-1}-\varepsilon y_{t-1}^2) \right\} \right], \notag
	\end{align}
	where we have used Chernoff's inequality \cite[Chap. 2]{wainwright2019high}. Next, we need a bound on the right hand side. This can be derived by constructing a decoupled sequence and applying Proposition \ref{proposition1}, as follows: let $d_t \in \mathcal{F}_{t}:=\sigma\{e_1,\dots,e_{t}\}$ with the same $\mathcal{F}_{t-1}$-conditional distribution as $e_t y_{t-1}$. Moreover, let the sequence $\{d_t\}$ be conditionally independent given the master $\sigma$-field $\mathcal{G}:= \sigma(\{e_t\})$. Given this decoupled sequence, from Proposition \ref{proposition1} we have that
	
\begin{align}
	\inf_{\lambda>0}\mathbb{E}\left[ e^{ \lambda \sum_{t=2}^N (e_t y_{t-1}-\varepsilon y_{t-1}^2)} \right]&\leq \inf_{\tilde{\lambda}>0}\sqrt{\mathbb{E}\left[ e^{ \tilde{\lambda} \sum_{t=2}^N (d_t-\varepsilon y_{t-1}^2)} \right]} \notag \\
	&=\inf_{\tilde{\lambda}>0} \sqrt{\mathbb{E}\left[ e^{ \left(\frac{\tilde{\lambda}^2 \sigma^2}{2}-\varepsilon \tilde{\lambda}\right) \sum_{t=1}^{N-1} y_{t}^2 } \right]}, \notag
	\end{align}
	where we have used the fact that $\{y_t\}_{t=1}^{N-1}$ is $\mathcal{G}$-measurable. Now, $\tilde{\lambda}^2 \sigma^2/2-\varepsilon \tilde{\lambda}$ is minimum at $\tilde{\lambda}^{\textnormal{opt}} = \varepsilon/\sigma^2>0$, giving $(\tilde{\lambda}^{\textnormal{opt}})^2 \sigma^2/2-\varepsilon \tilde{\lambda}^{\textnormal{opt}} = -\varepsilon^2/(2\sigma^2)$. Thus, we have found the following bound, valid for stable and unstable systems:
	\begin{equation}
	\label{unstableandstablebound}
	\mathbb{P}(\hat{a}_N-a_0>\varepsilon) \leq \sqrt{\mathbb{E}\left[ \exp\left\{ -\frac{\varepsilon^2}{2\sigma^2} \sum_{t=1}^{N-1} y_{t}^2 \right\} \right]}.
	\end{equation}
	To further compute this bound for the stable case, notice that $\begin{bmatrix}
	y_1 & \cdots & y_{N-1}
	\end{bmatrix}^\top \sim \mathcal{N}(0,\mathbf{R}_{N-1})$, where $\mathbf{R}_{N-1}$ is a Toeplitz covariance matrix. By computing the moment generating function in \eqref{unstableandstablebound} using \cite[Theorem 3.2a.1]{mathai1992quadratic}, we find that
	\begin{equation}
	\label{generalizedchi2}
	\hspace{-0.04cm}\hspace{-0.2cm}\sqrt{\mathbb{E}\left[ \exp\left\{ -\frac{\varepsilon^2}{2\sigma^2} \sum_{t=1}^{N-1} y_{t}^2 \right\} \right]} = \textnormal{det}^{-\frac{1}{4}}\left( \mathbf{I}_{N-1}+\frac{\varepsilon^2}{\sigma^2}\mathbf{R}_{N-1}\right). \hspace{-0.1cm}
	\end{equation}
	From now on, we denote the symmetric Toeplitz matrix $\mathbf{I}_{N-1}+(\varepsilon^2/\sigma^2)\mathbf{R}_{N-1}$ as $\mathbf{T}_{N-1}$. The final step consists in finding a lower bound for det($\mathbf{T}_{N-1}$), for which we rely on a result from estimation theory. As minimum values of one-step prediction errors of increasing predictor complexity \cite{gray2006toeplitz}, it is known that	
	\begin{equation}
	\frac{\det(\mathbf{T}_{N+1})}{\det(\mathbf{T}_N)} \leq \frac{\det(\mathbf{T}_{N})}{\det(\mathbf{T}_{N-1})}, \quad N>1, \notag
	\end{equation}
	and also
	\begin{equation}
	\lim_{N \to \infty} \frac{\det(\mathbf{T}_{N+1})}{\det(\mathbf{T}_N)} = \exp\left\{ \frac{1}{2\pi} \int_{-\pi}^\pi \log\left(1+\frac{\varepsilon^2}{\sigma^2}\Phi_{y}(e^{j\omega})\right)d\omega \right\}, \notag
	\end{equation}
	where in this case, $\Phi_{y}(e^{j\omega})=\sigma^2/|e^{j\omega}-a_0|^2$. Hence, we can lower bound $\det(\mathbf{T}_{N-1})$ as follows:
	\begin{align}
	&\det(\mathbf{T}_{N-1})=\frac{\det(\mathbf{T}_{N-1})}{\det(\mathbf{T}_{N-2})} \frac{\det(\mathbf{T}_{N-2})}{\det(\mathbf{T}_{N-3})} \dots \frac{\det(\mathbf{T}_2)}{T_1} T_1 \notag \\
	\label{szegointegral}
	&\hspace{0.15cm}\geq \left[\exp\left\{\frac{1}{2\pi} \int_{-\pi}^\pi \log\left(1+\frac{\varepsilon^2}{|e^{j\omega}-a_0|^2}\right) d\omega \right\} \right]^{N-2} T_1. 
	\end{align}
Here, $T_1$ can be computed as $(1-a_0^2+\varepsilon^2)/(1-a_0^2)$, while the integral in \eqref{szegointegral} can be evaluated using Szeg\"o's formula \cite{gray2006toeplitz}, according to which the integral is equal to the minimum variance of the prediction error for a process with spectrum $\Phi_{\tilde{y}}(e^{j\omega}):=1+\varepsilon^2|e^{j\omega}-a_0|^{-2}$. By this result and \eqref{generalizedchi2}, we obtain \eqref{pbstable1}.
	
	Finally, the upper bound of the estimator's variance for finite samples can be computed through the following inequality:
	\begin{align}
	\textnormal{var}&\{\hat{a}_N\}= \int_0^\infty \mathbb{P}\{(\hat{a}_N-a_0)^2>x\} dx \notag \\
	&\leq 2 \int_0^\infty \left( \frac{1+a_0^2 + x}{2}+\frac{1}{2}\sqrt{(1+a_0^2 + x)^2-4a_0^2} \right)^{\frac{-(N-2)}{4}} \hspace{-0.2cm}dx \notag \\
	&= \frac{8}{N-6}-\frac{8a_0^2}{N+2}. \tag*{\qedhere}
	\end{align}
\end{proof}
Note that our probability bound does not depend on the noise variance, which is intuitive since the random variable $\{\hat{a}_N-a_0\}$ is unaffected by scale changes. Regarding variance, our bound captures the correct behavior of the least squares estimate for this scenario, since stable systems that are closer to the stability limit are in fact easier to learn due to a greater signal-to-noise ratio, as argued in \cite{simchowitz2018learning}. By comparing \eqref{varstable1} to the well-known asymptotic Cram\'{e}r-Rao variance lower bound for this case, which says that for large $N$,
\begin{equation}
\textnormal{var}\{\hat{a}_N\} \approx \frac{1-a_0^2}{N-1}, \notag
\end{equation}
we see that the variance bound we have obtained has the same structure up to a positive constant factor, indicating that it is rate-optimal. Finally, $N\geq 7$ is needed since the integral of the probability bound solved for the variance does not converge for $N\leq 6$. 

\section{Bounds for unstable AR(1) processes}
\label{sec:unstable}
In the previous section, we provided a rigorous finite-sample analysis of a stable AR(1) process. We will now consider the unstable case, where $|a_0|>1$. Contrary to the previous scenario, stationarity cannot be assumed. Despite this, a similar analysis still holds, and we are able to show that the variance bound decays exponentially in the number of samples, in contrast with the stable case. We state the result in Theorem \ref{theorem2}.

\begin{theorem}
\label{theorem2}
Consider the AR(1) process described by \eqref{ar1} where $|a_0|>1$ and $y_0=0$, and consider the least-squares estimate \eqref{ls1}. Then, for all $\varepsilon>0$ and sample size $N\geq 2$,
	\begin{equation}
	\label{pbunstable1}
	\mathbb{P}(\hat{a}_N-a_0>\varepsilon)\leq\ \left(\frac{\lambda_2-\lambda_1}{(\lambda_2-1)\lambda_1^{N}-(\lambda_1-1)\lambda_2^{N}}\right)^{\frac{1}{4}}, 
	\end{equation}
	where 
	\begin{equation}
	\label{roots}
	\lambda_{1,2} = \frac{1+a_0^2+\varepsilon^2}{2}\mp\frac{\sqrt{(1+a_0^2+\varepsilon^2)^2-4a_0^2}}{2}, \hspace{0.2cm} \lambda_2>\lambda_1.  
	\end{equation}
	Furthermore, for all $N\geq 7$,
	\begin{align}
	\textnormal{var}&\{\hat{a}_N\}\leq |a_0|^{-\frac{2N}{5}}\times \notag \\
	\label{varunstable1}
	&\left[ \frac{2a_0^4(N+5)}{N} + 8\sqrt[4]{\frac{N}{N+5}}\left( \frac{a_0^2}{N-6} -\frac{1}{N+2}\right) \right].
	\end{align}
\end{theorem}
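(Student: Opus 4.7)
The plan is to reuse the decoupling argument from Theorem \ref{theorem1} up to the general bound (\ref{unstableandstablebound}), which already holds for both stable and unstable regimes, and then exploit the fact that with $y_0=0$ the noise vector maps to the observation vector through an explicit unit-triangular matrix. This lets us compute $\det(\mathbf{T}_{N-1})$ exactly, rather than merely lower-bound it as was needed in the stable case.

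The starting point is
\[
\mathbb{P}(\hat{a}_N-a_0>\varepsilon)\leq \det^{-1/4}(\mathbf{T}_{N-1}),\quad \mathbf{T}_{N-1}:=\mathbf{I}_{N-1}+\frac{\varepsilon^2}{\sigma^2}\mathbf{R}_{N-1},
\]
which follows verbatim from the chain of inequalities leading to (\ref{unstableandstablebound}) and (\ref{generalizedchi2}). Because $y_0=0$, iterating (\ref{ar1}) yields $(y_1,\dots,y_{N-1})^\top = A(e_1,\dots,e_{N-1})^\top$, where $A$ is the unit lower-triangular Toeplitz matrix with $A_{ij}=a_0^{i-j}$ for $i\geq j$. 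Hence $\mathbf{R}_{N-1}=\sigma^2 AA^\top$, and the factorization $\mathbf{T}_{N-1}=AA^\top(A^{-\top}A^{-1}+\varepsilon^2\mathbf{I}_{N-1})$ together with $\det(A)=1$ gives $\det(\mathbf{T}_{N-1})=\det(A^{-\top}A^{-1}+\varepsilon^2\mathbf{I}_{N-1})$. A short calculation shows this matrix is tridiagonal, with diagonal entries equal to $1+a_0^2+\varepsilon^2$ everywhere except the last (which is $1+\varepsilon^2$) and off-diagonal entries equal to $-a_0$. I would then apply the three-term recursion $D_k=(1+a_0^2+\varepsilon^2)D_{k-1}-a_0^2 D_{k-2}$ for the principal minors of a homogeneous tridiagonal matrix of this form, whose characteristic polynomial has roots $\lambda_{1,2}$ exactly as in (\ref{roots}); solving with $D_0=1$, $D_1=1+a_0^2+\varepsilon^2$ yields $D_k=(\lambda_2^{k+1}-\lambda_1^{k+1})/(\lambda_2-\lambda_1)$. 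Cofactor expansion along the last row of $A^{-\top}A^{-1}+\varepsilon^2\mathbf{I}_{N-1}$ accommodates the modified final diagonal entry, producing $\det(\mathbf{T}_{N-1})=D_{N-1}-a_0^2 D_{N-2}$, and simplifying with the Vieta relations $\lambda_1\lambda_2=a_0^2$ and $\lambda_1+\lambda_2=1+a_0^2+\varepsilon^2$ collapses this to $[(\lambda_2-1)\lambda_1^N-(\lambda_1-1)\lambda_2^N]/(\lambda_2-\lambda_1)$, the reciprocal of the fourth power of the right-hand side of (\ref{pbunstable1}).

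For the variance bound I would proceed as in Theorem \ref{theorem1}, starting from $\textnormal{var}\{\hat{a}_N\}=\int_0^\infty\mathbb{P}\{(\hat{a}_N-a_0)^2>x\}\,dx$ and inserting the probability bound (with a factor of two for the two symmetric tails). Two algebraic identities are instrumental: $\lambda_1\lambda_2=a_0^2$ and $(1-\lambda_1)(\lambda_2-1)=\lambda_1+\lambda_2-1-\lambda_1\lambda_2=\varepsilon^2$. The first makes the numerator $(\lambda_2-1)\lambda_1^N+(1-\lambda_1)\lambda_2^N$ amenable to AM--GM, producing a lower bound on $\det(\mathbf{T}_{N-1})$ proportional to $\varepsilon\,|a_0|^N/(\lambda_2-\lambda_1)$, while the cruder alternative $(1-\lambda_1)\lambda_2^N\geq(1-\lambda_1)|a_0|^{2N}$ controls the tail for large $\varepsilon$. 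Splitting the integration range at some $x_0>0$, using the trivial estimate $\mathbb{P}\leq 1$ on $[0,x_0]$ and the derived exponential-in-$N$ bound on $[x_0,\infty)$, and optimizing $x_0$, naturally generates the factor $|a_0|^{-2N/5}$, while closed-form evaluation of the remaining integral, analogous to the final display in the proof of Theorem \ref{theorem1}, produces the polynomial-in-$N$ factors in (\ref{varunstable1}).

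The main obstacle, and the only genuinely new calculation beyond what is already in the proof of Theorem \ref{theorem1}, will be this last integration step: once the integrand is expressed through $\lambda_{1,2}$ it is not an elementary function of $\varepsilon$, so the art lies in choosing a lower bound on $\det(\mathbf{T}_{N-1})$ that is simultaneously loose enough to admit closed-form integration and tight enough to retain the exponential-in-$N$ decay. The somewhat unusual exponent $2/5$ in $|a_0|^{-2N/5}$ strongly suggests that the splitting threshold $x_0$ is chosen to balance the trivial contribution on $[0,x_0]$ against the tail integral on $[x_0,\infty)$.
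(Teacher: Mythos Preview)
Your derivation of the deviation bound \eqref{pbunstable1} is correct and in fact slightly cleaner than the paper's. Both arrive at the same tridiagonal matrix $A^{-\top}A^{-1}+\varepsilon^2\mathbf{I}_{N-1}$, but the paper gets there by writing down the covariance entries $\bar r_{ij}$ explicitly, verifying the closed form of $\sigma^2\bar{\mathbf{R}}_{N-1}^{-1}$ entry by entry, and then invoking Cauchy's rank-one perturbation formula to separate the Toeplitz part from the corner correction. Your factorisation $\mathbf{R}_{N-1}=\sigma^2AA^\top$ with unit-triangular $A$ reaches the same object in one line, and your last-row cofactor expansion $D_{N-1}-a_0^2D_{N-2}$ coincides with the paper's continuant identity $\det(\bar{\mathbf{T}}_{N-1}+\varepsilon^2\mathbf{I})-a_0^2\det(\bar{\mathbf{T}}_{N-2}+\varepsilon^2\mathbf{I})$.

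For the variance bound your overall plan---split at $x_0$, bound trivially on $[0,x_0]$, integrate a relaxed tail, then balance---is exactly what the paper does, and your heuristic $x_0^{5/4}\sim|a_0|^{-N/2}$ correctly predicts the exponent $|a_0|^{-2N/5}$. However, the two specific relaxations you propose do \emph{not} give an integrable tail. The AM--GM bound yields $\mathbb{P}\lesssim\bigl((\lambda_2-\lambda_1)/(\varepsilon|a_0|^{N})\bigr)^{1/4}$, and since $\lambda_2-\lambda_1\sim\varepsilon^2$ for large $\varepsilon$, this grows like $\varepsilon^{1/4}$. Likewise, replacing $\lambda_2^{N}$ by $|a_0|^{2N}$ in $(1-\lambda_1)\lambda_2^{N}$ gives $\mathbb{P}\lesssim\bigl((\lambda_2-\lambda_1)/|a_0|^{2N}\bigr)^{1/4}\sim\varepsilon^{1/2}|a_0|^{-N/2}$, which again diverges. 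Neither bound can control $\int_{x_0}^\infty$.

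The paper's fix is simply to drop the \emph{other} term in the numerator: since $(\lambda_2-1)\lambda_1^{N}>0$, one has $\det(\mathbf{T}_{N-1})\geq (1-\lambda_1)\lambda_2^{N}/(\lambda_2-\lambda_1)$, i.e.\ $\mathbb{P}\leq \lambda_2^{-N/4}\bigl((\lambda_2-\lambda_1)/(1-\lambda_1)\bigr)^{1/4}$, keeping $\lambda_2^{N}$ intact. After the substitution $z=\lambda_2$ (so $\lambda_1=a_0^2/z$ and $dx=(z^2-a_0^2)z^{-2}\,dz$) the tail integral becomes $\int_{z^*}^\infty (z^2-a_0^2)^{5/4}(z-a_0^2)^{-1/4}z^{-N/4-2}\,dz$, which is handled by bounding $(z-a_0^2)^{-1/4}\leq(z^*-a_0^2)^{-1/4}$, $(z^2-a_0^2)^{1/4}\leq z^{1/2}$, and extending the lower limit down to $a_0^2$. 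The free parameter $m\geq 1/4$ in the paper's relaxed bound \eqref{bound1} is used only to obtain a closed-form upper estimate for the crossing point $x^*$; optimising over $m$ then yields $m=5/4$ and the stated rate. This substitution-and-extension step is the missing ingredient in your sketch.
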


\begin{proof}[Proof sketch]
By following the same lines as the stable case proof, we reach \eqref{unstableandstablebound}. Since $y_0=0$, we find that $\begin{bmatrix}
	y_1 & \cdots & y_{N-1}
	\end{bmatrix}^\top \sim \mathcal{N}(0,\bar{\mathbf{R}}_{N-1})$, where $\bar{\mathbf{R}}_{N-1}$ is a covariance matrix with entries
	\begin{equation}
	\bar{r}_{ij}=a_0^{|i-j|} \sigma^2 \frac{a_0^{2\min\{i,j\}}-1}{a_0^2-1}, \quad i,j=1,\dots,N-1. \notag
	\end{equation}
This covariance matrix can be shown to satisfy $\sigma^2\bar{\mathbf{R}}_{N-1}^{-1} = \bar{\mathbf{T}}_{N-1}-a_0^2 \bm{\eta}_{N-1}\bm{\eta}_{N-1}^\top$, where $\bm{\eta}_{N-1}\in \mathbb{R}^{N-1}$ denotes the $N-1$-th column of the identity matrix of size $N-1$, and $\bar{\mathbf{T}}_{N-1}$ is a non-singular tridiagonal Toeplitz matrix with tridiagonal formed by $\{-a_0,a_0^2+1,-a_0\}$. Thus, by similar computations as in \eqref{generalizedchi2}, we reach
	\begin{align}
	\mathbb{P}(\hat{a}_N-a_0>\varepsilon) &\leq \textnormal{det}^{-\frac{1}{4}} \left(\mathbf{I}_{N-1}+\frac{\varepsilon^2}{\sigma^2}\bar{\mathbf{R}}_{N-1} \right) \notag \\
	&=\left(\frac{\det (\bar{\mathbf{T}}_{N-1}-a_0^2 \bm{\eta}_{N-1}\bm{\eta}_{N-1}^\top)}{\det(\bar{\mathbf{T}}_{N-1}+\varepsilon^2\mathbf{I}_{N-1}-a_0^2 \bm{\eta}_{N-1}\bm{\eta}_{N-1}^\top)}\right)^{\frac{1}{4}}. \notag
	\end{align} 
	Using Cauchy's rank-one perturbation formula \cite{Horn2012}, and the fact that $\bm{\eta}_{N-1}^\top \bar{\mathbf{T}}_{N-1}^{-1}\bm{\eta}_{N-1} = \det(\bar{\mathbf{T}}_{N-2})/\det(\bar{\mathbf{T}}_{N-1})$, we can write the previous inequality as
	\begin{align}
	\mathbb{P}&(\hat{a}_N-a_0>\varepsilon) \leq \notag \\
	&\left(\frac{\textnormal{det} (\bar{\mathbf{T}}_{N-1})-a_0^2\det (\bar{\mathbf{T}}_{N-2})}{\det (\bar{\mathbf{T}}_{N-1}+\varepsilon^2\mathbf{I}_{N-1})-a_0^2\det (\bar{\mathbf{T}}_{N-2}+\varepsilon^2\mathbf{I}_{N-2})}\right)^{\frac{1}{4}}. \notag 
	\end{align}
	The rest of the proof consists in calculating the determinants, which can be done by exploiting properties of continuants \cite[p. 564]{muir1960treatise}. These computations lead to the bound in \eqref{pbunstable1}. 
	
	In order to obtain a bound on the variance, we first determine another probability bound which is easier to integrate. By manipulating the roots $\lambda_{1,2}$ in \eqref{roots}, we find that $\lambda_1<1<\lambda_2$, and hence, we can relax the deviation bound to
	\begin{align}
	\label{bound1}
	\mathbb{P}(\hat{a}_N-a_0>\varepsilon) \leq \min\left\{1,\frac{1}{\lambda_2^{N/4}} \left(\frac{\lambda_2-\lambda_1}{1-\lambda_1}\right)^m  \right\}, 
	\end{align}
	where $m \geq 1/4$ is a fixed value which we will set later. For the next calculations, we take a fixed positive number $x^*$, which we will also determine later, and 
	\begin{equation}
	z^*:=\frac{1+a_0^2+x^*}{2} + \frac{1}{2}\sqrt{(1+a_0^2+x^*)^2-4a_0^2}. \notag
	\end{equation}
	By substitution with this same relation, we compute
	\begin{align}
	\textnormal{var}\{\hat{a}_N\}&\leq 2 x^*+2 \int_{z^*}^\infty \sqrt[4]{\frac{z^2-a_0^2}{z-a_0^2}} z^{-\frac{N}{4}-2}(z^2-a_0^2) dz \notag \\
	\label{almostbound}
	&\leq 2 x^*+\frac{8}{\sqrt[4]{z^*-a_0^2}} \left(\frac{|a_0|^{3-\frac{N}{2}}}{N-6} -\frac{|a_0|^{1-\frac{N}{2}}}{N+2}\right),
	\end{align}
	where we have used that $z^*>a_0^2$, which is justified because
	\begin{equation}
	z^* =\frac{1}{2}\left( 1+a_0^2 + x^* + \sqrt{(1-a_0^2-x^*)^2+4x^*} \right) > a_0^2+x^*. \notag
	\end{equation}
	Next, we must find an appropriate value for $x^*$ and $z^*$. The idea behind our approach is to make $x^*$, $z^*$ (approximately) coincide with the change of regime of the bound of \eqref{bound1}. It can be shown that valid values for $x^*$ and $z^*$ are
	\begin{equation}
	x^*< \frac{|a_0|^{4-\frac{N}{2m}}(N+4m)}{N}, \quad z^*=a_0^2+\frac{|a_0|^{4-\frac{N}{2m}}(N+4m)}{N}. \notag
	\end{equation}
By replacing these quantities into \eqref{almostbound}, we can obtain the best rate of decay by choosing $m = 5/4$, leading to the bound in \eqref{varunstable1}.
\end{proof}

This bound provides the intuitive decay for unstable systems: as the sample size increases, the variance of the least squares estimate decreases exponentially, and the rate of decay is proportional to the degree of instability of the system.

\section{Simulations}
\label{sec:simulations}
To validate our theoretical results, we show how the bounds obtained in Theorems 1 and 2 perform in practice. First, we test the probability bounds given in \eqref{pbstable1} and \eqref{pbunstable1}. With $5\cdot 10^{5}$ Monte Carlo runs, we have obtained sample probabilities for the event $\{\hat{a}_N-a_0>\varepsilon\}$ for $\varepsilon$ ranging from $0.01$ to $5$, and $N$ from $2$ to $100$. Both simulated and upper bound surfaces for $a_0 = 0.5, 0.98, 1.01$ and $1.1$ can be seen in Fig. \ref{fig1}. We have also obtained the sample variance for the same number of Monte Carlo runs, and from $N=7$ to $N=1000$. Fig. \ref{fig2} shows the results for the same values of $a_0$ previously studied. 

\begin{figure}[h!]
	\centering{
		\includegraphics[trim=0.0cm 0cm 0.cm 0cm,clip, width=0.48\textwidth]{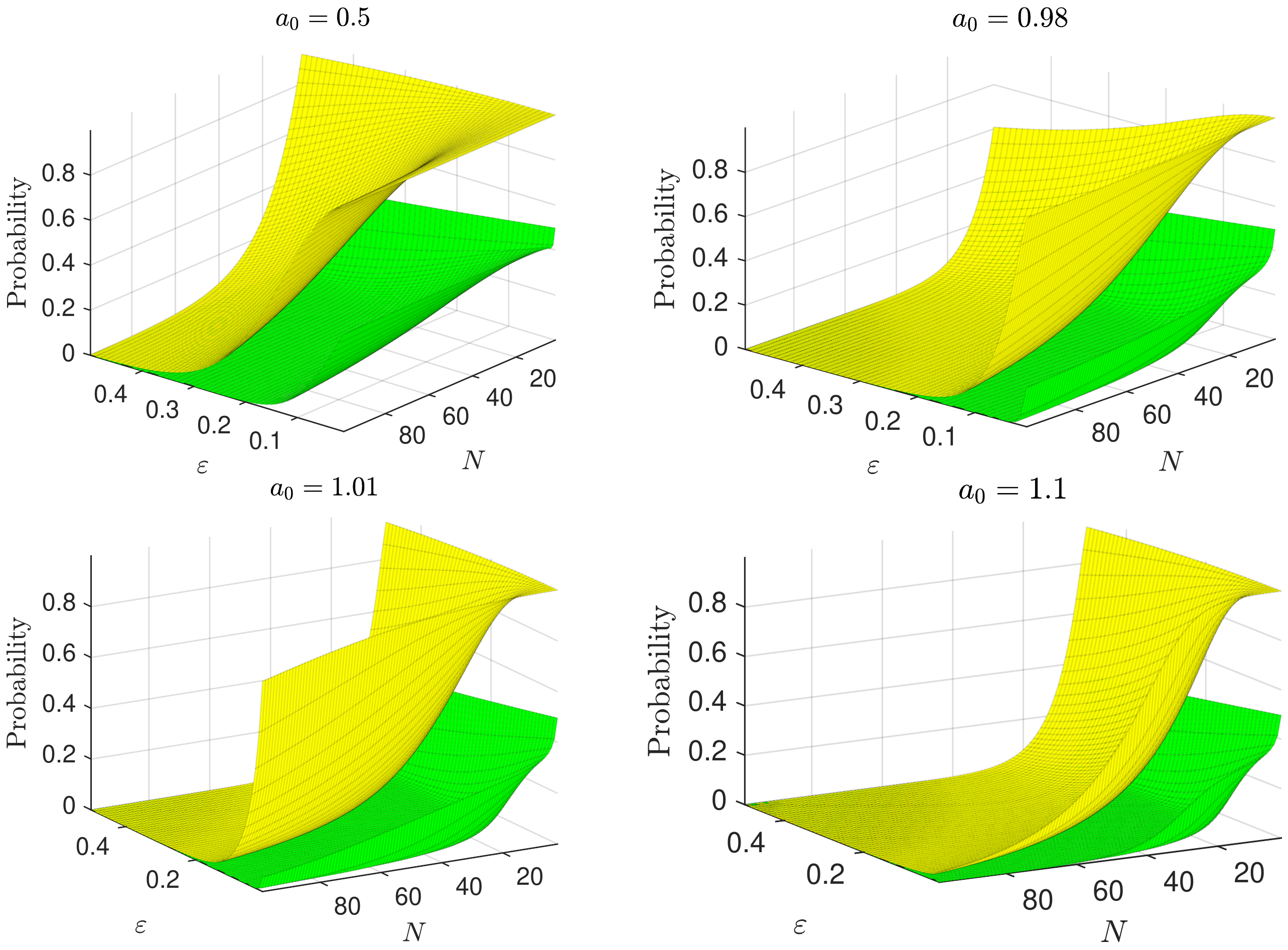}
		\caption{True deviation probability $\mathbb{P}(\hat{a}_N-a_0>\varepsilon)$ (green) and its upper bound (yellow) for $a_0 = 0.5, 0.98, 1.01$, and $1.1$.}
		\label{fig1}}
\end{figure} 

\begin{figure}[h!]
	\centering{
		\includegraphics[trim=0.0cm 0cm 0.cm 0cm,clip, width=0.48\textwidth]{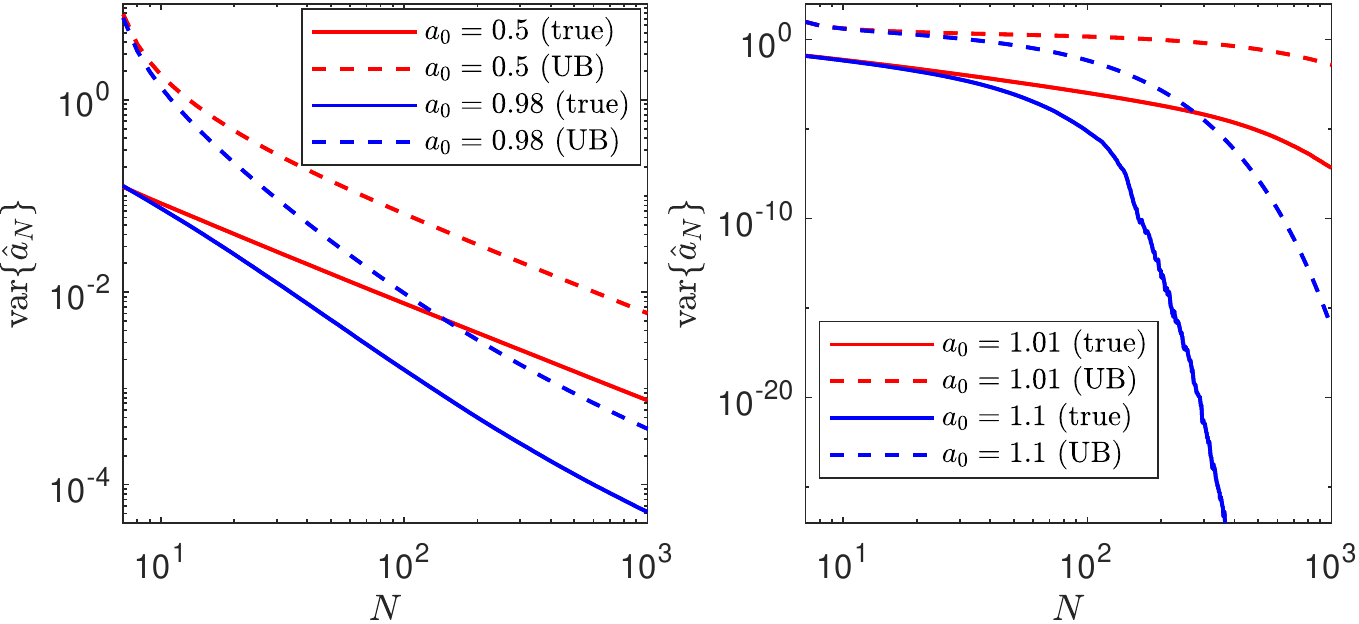}
		\caption{True (solid) and upper bounds (dashed) for var$\{\hat{a}_N\}$. Left: $a_0=0.5$ (red), $0.98$ (blue). Right: $a_0=1.01$ (red), $1.1$ (blue).\vspace{-0.4cm}}
		\label{fig2}}
\end{figure} 
For both stable and unstable cases, the bounds in deviation probability and variance are shown to have similar qualitative behavior to the simulated curves. Some conservatism was expected, due in part to Chernoff's inequality and by the square root introduced by the application of Proposition \ref{proposition1}. As the real parameter ranges from stable to unstable, the deviation probability flattens to zero quicker due to the increase of $\lambda_2$, which is a pivotal term in both bounds. For fixed $N$, this naturally leads to a reduction of variance for more unstable systems, which is also qualitatively predicted with our results.

\section{Conclusions}
\label{sec:conclusions}
In this paper we have obtained finite-sample upper bounds for the probability of deviation and variance of the least square estimator of the parameter of a scalar AR(1) process. Our results consider both stable and unstable cases, and reflect the correct qualitative behavior for both scenarios. For obtaining these bounds we have used elements of decoupling theory, which provides a promising approach to obtaining finite-sample bounds in dynamical systems.

Future work includes extending these results for general autoregressive processes with sub-Gaussian noise, and applying these bounds for performing a non-asymptotic analysis of general system identification model structures.

\newpage

%\vfill\pagebreak
\balance
\bibliographystyle{IEEEbib}
\bibliography{References}

\vfill\pagebreak
\newpage
\onecolumn
\section{Supplementary material}
\label{sec:appendix}
In this section we complete details on parts of the proofs of Theorems \ref{theorem1} and \ref{theorem2}.
\begin{fact}
The following identity holds:
\begin{equation}
\label{integralszego}
\frac{1}{2\pi} \int_{-\pi}^\pi \log\left(1+\frac{\varepsilon^2}{|e^{j\omega}-a_0|^2}\right) d\omega=\log\left(\frac{1+a_0^2 + \varepsilon^2}{2}+\sqrt{\left(\frac{1+a_0^2 + \varepsilon^2}{2}\right)^2-a_0^2} \right). 
\end{equation}
\end{fact}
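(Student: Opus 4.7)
The plan is to reduce the integral to two elementary evaluations of $\tfrac{1}{2\pi}\int_{-\pi}^{\pi}\log|e^{j\omega}-z|^2\,d\omega$, which vanishes whenever $|z|<1$ (a direct consequence of Jensen's formula, or equivalently of the mean value property of the harmonic function $w\mapsto\log|z-w|$ away from its singularity).

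First I would split the integrand as
\begin{equation}
\log\!\left(1+\frac{\varepsilon^2}{|e^{j\omega}-a_0|^2}\right) = \log\!\bigl(|e^{j\omega}-a_0|^2+\varepsilon^2\bigr) - \log|e^{j\omega}-a_0|^2. \notag
\end{equation}
Because \eqref{integralszego} is invoked only in the stable regime $|a_0|<1$, the contribution of the second term vanishes, so it remains to evaluate the integral of the first term.

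Next I would factorize $|e^{j\omega}-a_0|^2+\varepsilon^2 = (1+a_0^2+\varepsilon^2)-2a_0\cos\omega$ as $\alpha^2|e^{j\omega}-\beta|^2 = \alpha^2(1+\beta^2-2\beta\cos\omega)$ for suitable real $\alpha,\beta$. Matching coefficients reduces to the quadratic $a_0\beta^2-(1+a_0^2+\varepsilon^2)\beta+a_0=0$, whose two real roots $\beta_\pm$ are reciprocals by Vieta's formulas; consequently exactly one, say $\beta_-$, satisfies $|\beta_-|<1$ (this is immediate at $\varepsilon=0$, where the roots are $a_0$ and $1/a_0$, and extends to $\varepsilon>0$ by continuity together with $\beta_+\beta_-=1$). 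Rationalizing the resulting expression for $\alpha^2 = a_0/\beta_-$ then gives
\begin{equation}
\alpha^2 = \frac{1+a_0^2+\varepsilon^2}{2} + \sqrt{\Bigl(\tfrac{1+a_0^2+\varepsilon^2}{2}\Bigr)^{\!2} - a_0^2}. \notag
\end{equation}

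Finally, applying the vanishing integral once more to $\log|e^{j\omega}-\beta_-|^2$ (permissible since $|\beta_-|<1$) yields
\begin{equation}
\frac{1}{2\pi}\int_{-\pi}^{\pi}\log\!\bigl(|e^{j\omega}-a_0|^2+\varepsilon^2\bigr)\,d\omega = \log\alpha^2, \notag
\end{equation}
which is exactly the right-hand side of \eqref{integralszego}. I expect the only slightly delicate step to be the verification that $|\beta_-|<1$ uniformly for $|a_0|<1$ and $\varepsilon>0$; this can be handled via an elementary discriminant inequality, after which the rest is bookkeeping.
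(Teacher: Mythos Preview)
Your argument is correct. The factorization $|e^{j\omega}-a_0|^2+\varepsilon^2=\alpha^2|e^{j\omega}-\beta_-|^2$ is exactly the spectral factorization the paper performs, and your $\alpha^2$ coincides with the paper's $r_e=a_0 z_2$ (note $(1+a_0^2+\varepsilon^2)^2-4a_0^2=(1-a_0^2+\varepsilon^2)^2+4a_0^2\varepsilon^2$). The difference is in how the integral is evaluated: the paper invokes Szeg\"o's formula as a black box, identifying $\tfrac{1}{2\pi}\int\log\Phi_{\tilde y}$ with the logarithm of the innovation variance of the factorized spectrum, whereas you bypass that machinery and appeal directly to Jensen's formula (equivalently, the mean value property) to kill the two integrals $\tfrac{1}{2\pi}\int\log|e^{j\omega}-a_0|^2\,d\omega$ and $\tfrac{1}{2\pi}\int\log|e^{j\omega}-\beta_-|^2\,d\omega$. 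Your route is more elementary and self-contained, at the cost of making explicit the hypothesis $|a_0|<1$ (which the paper also uses implicitly, since Szeg\"o's formula needs an integrable $\log\Phi_{\tilde y}$). The verification that $|\beta_-|<1$ can be tightened without continuity: the discriminant $(1+a_0^2+\varepsilon^2)^2-4a_0^2\geq (1-|a_0|)^2\bigl((1+|a_0|)^2+ \cdots\bigr)>0$ shows the roots are real, distinct, of the same sign, with product $1$, so exactly one lies strictly inside the unit circle; checking $\varepsilon=0$ identifies it as $\beta_-=a_0$.
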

\begin{proof}
By Szeg\"o's formula \cite{kailath2000linear}, the integral in question is equal to $\log(r_e)$, where $r_e$ is the constant factor of the spectral factorization of 
\begin{equation}
\Phi_{\tilde{y}}(z) = \frac{1+a_0^2 - a_0(z+z^{-1})+\varepsilon^2}{|z-a_0|^2}. \notag
\end{equation}
Here, $1+a_0^2-a_0(z+z^{-1})+\varepsilon^2 = z^{-1}(-a_0z^2+[1+a_0^2 + \varepsilon^2]z-a_0)$, and therefore the numerator of $\Phi_{\tilde{y}}(z)$ has roots at
\begin{equation}
z_1 = \frac{1+a_0^2+\varepsilon^2}{2a_0}-\frac{\sqrt{(1-a_0^2 + \varepsilon^2)^2+4a_0^2 \varepsilon^2}}{2a_0}, \quad z_2 = \frac{1+a_0^2+\varepsilon^2}{2a_0}+ \frac{\sqrt{(1-a_0^2 + \varepsilon^2)^2+4a_0^2 \varepsilon^2}}{2a_0}. \notag
\end{equation}
Thus,
\begin{equation}
\Phi_{\tilde{y}}(z) = \left(\sqrt{a_0z_2} \frac{z-z_1}{z-a_0}\right) \left(\sqrt{a_0z_2} \frac{z-z_1}{z-a_0}\right)^*. \notag
\end{equation}
With this factorization, we conclude that $r_e = a_0 z_2$, from which we obtain \eqref{integralszego}.
\end{proof}

\begin{fact}
The matrix $\bar{\mathbf{R}}_{N-1}/\sigma^2$ formed with entries
\begin{equation}
\bar{r}_{ij}=a_0^{|i-j|} \frac{a_0^{2\min\{i,j\}}-1}{a_0^2-1}, \quad i,j=1,\dots,N-1, \notag
\end{equation}
has a tridiagonal matrix inverse of the form 
\begin{equation}
\label{inverse}
\sigma^2\bar{\mathbf{R}}_{N-1}^{-1}= \begin{bmatrix}
a_0^2+1 & -a_0 & & &  0\\
-a_0 & a_0^2+1 & -a_0 & &  \\
& -a_0 & \ddots & \ddots & \\
&  & \ddots & a_0^2+1 & -a_0 \\
0&  &  & -a_0 & 1 
\end{bmatrix}.
\end{equation}
Furthermore, the Toeplitz matrix $\sigma^2\bar{\mathbf{R}}_{N-1}^{-1}+a_0^2 \bm{\eta}_{N-1}\bm{\eta}_{N-1}^\top$, where $\bm{\eta}_{N-1}$ is the $N-1$-th column of the identity matrix of size $N-1$, is non-singular for $|a_0|>1$.
\end{fact}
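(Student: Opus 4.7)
The plan is to exploit the Markov structure of the AR(1) recursion to obtain the precision matrix in closed form, and then to invoke the classical eigenvalue formula for symmetric tridiagonal Toeplitz matrices to settle non-singularity.

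For the first claim, I would stack the recursion $y_t = a_0 y_{t-1} + e_t$, $t = 1, \ldots, N-1$, with $y_0 = 0$, into matrix form $\mathbf{L}\mathbf{y} = \mathbf{e}$, where $\mathbf{y} = [y_1, \ldots, y_{N-1}]^\top$, $\mathbf{e} = [e_1, \ldots, e_{N-1}]^\top$, and $\mathbf{L} \in \mathbb{R}^{(N-1) \times (N-1)}$ is the bidiagonal matrix with $1$'s on the diagonal and $-a_0$'s on the first subdiagonal. Because $\mathbf{L}$ is lower triangular with unit diagonal it is invertible, so $\mathbf{y} = \mathbf{L}^{-1}\mathbf{e}$ and $\bar{\mathbf{R}}_{N-1} = \mathbb{E}[\mathbf{y}\mathbf{y}^\top] = \sigma^2 \mathbf{L}^{-1}(\mathbf{L}^\top)^{-1}$, which yields $\sigma^2 \bar{\mathbf{R}}_{N-1}^{-1} = \mathbf{L}^\top \mathbf{L}$. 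An entrywise evaluation $(\mathbf{L}^\top \mathbf{L})_{ij} = \sum_k \mathbf{L}_{ki} \mathbf{L}_{kj}$ then reproduces the tridiagonal pattern in \eqref{inverse}: the diagonal entries are $1 + a_0^2$ except for the $(N-1, N-1)$ entry, which is $1$ because $\mathbf{L}$ has no $N$-th row to contribute the extra $a_0^2$, and the nearest off-diagonals equal $-a_0$. As a preliminary sanity check, I would verify that $\mathbb{E}[y_i y_j]$ matches the stated $\bar{r}_{ij}$ by iterating $y_t = \sum_{s=1}^{t} a_0^{t-s} e_s$ and summing a geometric series.

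For the second claim, adding $a_0^2 \bm{\eta}_{N-1}\bm{\eta}_{N-1}^\top$ to $\sigma^2 \bar{\mathbf{R}}_{N-1}^{-1}$ replaces the lone $1$ in the bottom-right entry by $1 + a_0^2$, so the resulting matrix $\bar{\mathbf{T}}_{N-1}$ is exactly the symmetric tridiagonal Toeplitz matrix with symbol $\{-a_0, 1+a_0^2, -a_0\}$. I would then invoke the closed-form spectrum of a symmetric tridiagonal Toeplitz matrix of size $n$ with diagonal $b$ and equal off-diagonals $a$, namely $b + 2a\cos(k\pi/(n+1))$ for $k = 1, \ldots, n$, which in our case gives eigenvalues $1 + a_0^2 - 2 a_0 \cos(k\pi/N)$ for $k = 1, \ldots, N-1$. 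Each eigenvalue is bounded below by $1 + a_0^2 - 2|a_0| = (|a_0| - 1)^2$, which is strictly positive whenever $|a_0| > 1$. Hence $\bar{\mathbf{T}}_{N-1}$ is positive definite and in particular non-singular.

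I do not anticipate any real obstacle. Once the bidiagonal factorization of the precision matrix is identified, the first part collapses to a short matrix multiplication, and the non-singularity claim follows directly from a textbook eigenvalue formula. The only subtlety is the mismatch between the last diagonal entry of $\sigma^2 \bar{\mathbf{R}}_{N-1}^{-1}$ and that of the Toeplitz matrix $\bar{\mathbf{T}}_{N-1}$, which is precisely compensated by the rank-one perturbation $a_0^2 \bm{\eta}_{N-1}\bm{\eta}_{N-1}^\top$.
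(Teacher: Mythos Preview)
Your proposal is correct, and for the first claim it takes a genuinely different and more conceptual route than the paper. The paper proves the inverse formula by brute force: it denotes by $\mathbf{J}$ the product of $\bar{\mathbf{R}}_{N-1}/\sigma^2$ with the claimed tridiagonal matrix and then verifies $\mathbf{J}_{ij}=\delta_{ij}$ through five separate case-by-case computations on the indices. Your argument instead identifies the Cholesky-type factorization $\sigma^2\bar{\mathbf{R}}_{N-1}^{-1}=\mathbf{L}^\top\mathbf{L}$ with $\mathbf{L}$ the unit lower bidiagonal matrix encoding the recursion $y_t-a_0 y_{t-1}=e_t$; the tridiagonal structure then falls out in one line from the product of two bidiagonal matrices, and the anomalous $(N{-}1,N{-}1)$ entry equal to $1$ is explained structurally (no $N$-th row of $\mathbf{L}$ to contribute the extra $a_0^2$). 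This buys insight and brevity over the paper's verification, at the mild cost of first having to confirm that the stated $\bar r_{ij}$ really is the covariance of $\mathbf{y}=\mathbf{L}^{-1}\mathbf{e}$, which you note can be done via the geometric-sum representation $y_t=\sum_{s=1}^t a_0^{t-s}e_s$.

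For the non-singularity claim the two approaches essentially coincide: the paper's main text uses Ger\v{s}gorin's theorem to obtain the same lower bound $(|a_0|-1)^2$ on the spectrum, while a footnote records exactly the closed-form eigenvalue formula $1+a_0^2-2|a_0|\cos(k\pi/N)$ that you invoke directly. Your expression $1+a_0^2-2a_0\cos(k\pi/N)$ agrees with the paper's as a \emph{set} of eigenvalues because $\cos((N-k)\pi/N)=-\cos(k\pi/N)$, so the sign of $a_0$ is immaterial; the subsequent bound $\geq (|a_0|-1)^2$ is correct either way.
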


\begin{proof} Denote the matrix product between $\sigma^2\bar{\mathbf{R}}_{N-1}$ and the matrix in \eqref{inverse} as $\mathbf{J}$, and denote the elements of $\sigma^2\bar{\mathbf{R}}_{N-1}$ and the matrix in \eqref{inverse} as $\{r_{ij}\}$ and $\{\tilde{r}_{ij}\}$ respectively. We shall compute the elements of the matrix $\mathbf{J}$, labeled $\{\mathbf{J}_{ij}\}$.
	\begin{itemize}
		\item
		For $1<i<N-1$:
		\begin{equation}
		\mathbf{J}_{ii}= \sum_{k=1}^{N-1} r_{ik} \tilde{r}_{ki} =-a_0 r_{i,i-1}+(a_0^2+1)r_{ii}-a_0 r_{i,i+1} = \frac{-a_0^2(a_0^{2i-2}-1)+(a_0^2+1)(a_0^{2i}-1)-a_0^2(a_0^{2i}-1)}{a_0^2-1} =1. \notag   
		\end{equation}
		\item
		For $1<j<i\leq N-1$:
		\begin{equation}
		\mathbf{J}_{ij}= -a_0 r_{i,j-1}+(a_0^2+1)r_{ij}-a_0 r_{i,j+1} = \frac{-a_0^{i-j+2}(a_0^{2j-2}-1)+(a_0^2+1)a_0^{i-j}(a_0^{2j}-1)-a_0^{i-j}(a_0^{2j+2}-1)}{a_0^2-1}=0. \notag  
		\end{equation}
		\item
		For $1\leq i<j< N-1$:
		\begin{equation}
		\mathbf{J}_{ij}= -a_0 r_{i,j-1}+(a_0^2+1)r_{ij}-a_0 r_{i,j+1} = \frac{-a_0^{i-j}(a_0^{2i}-1)+(a_0^2+1)a_0^{j-i}(a_0^{2i}-1)-a_0^{j-i+2}(a_0^{2i}-1)}{a_0^2-1} =0. \notag   
		\end{equation}
		\item
		For $j=1, i>1$:
		\begin{equation}
		\mathbf{J}_{i1}= (a_0^2+1)r_{i1}-a_0 r_{i,2} = (a_0^2+1)a_0^{i-1}-a_0^{i-1}\left(\frac{a_0^{4}-1}{a_0^2-1}\right) =0. \notag   
		\end{equation}
		\item
		For $i<N-1, j=N-1$:
		\begin{equation}
		\mathbf{J}_{i,N-1}= -a_0 r_{i,N-2}+ r_{i,N-1} = -a_0 a_0^{N-i-2}\left(\frac{a_0^{2i}-1}{a_0^2-1}\right)+a_0^{N-i-1}\left(\frac{a_0^{2i}-1}{a_0^2-1}\right) =0. \notag   
		\end{equation}
	\end{itemize}
	Lastly, $\mathbf{J}_{11} = (a_0^2+1)- \frac{a_0^2(a_0^2-1)}{a_0^2-1}=1$ and $\mathbf{J}_{N-1,N-1} = -\frac{a_0^2(a_0^{2N-4}-1)}{a_0^2-1}+\frac{(a_0^{2N-2}-1)}{a_0^2-1}=1 $, from which we conclude that $\mathbf{J}=\mathbf{I}_{N-1}$.
	
	Regarding the invertibility of $\sigma^2\bar{\mathbf{R}}_{N-1}^{-1}+a_0^2 \bm{\eta}_{N-1}\bm{\eta}_{N-1}^\top$, we use Ger\v{s}gorin's circle Theorem\footnote{En fact, an explicit expression for the eigenvalues exists: it can be shown (see e.g. \cite{kulkarni1999eigenvalues}) that the eigenvalues of $\sigma^2\bar{\mathbf{R}}_{N-1}^{-1}+a_0^2 \bm{\eta}_{N-1}\bm{\eta}_{N-1}^\top$ are given by $\lambda_k = a_0^2+1-2|a_0|\cos\left( \frac{k\pi}{N} \right)$, $k=1,2,\dots,N-1$,	which implies that $\lambda_k \geq a_0^2+1-2|a_0| = (|a_0|-1)^2>0$ for $k=1,\dots,N-1$.} \cite{Horn2012} and the symmetry of this matrix to bound the spectrum of $\sigma^2\bar{\mathbf{R}}_{N-1}^{-1}+a_0^2 \bm{\eta}_{N-1}\bm{\eta}_{N-1}^\top$. This spectrum, which we call $\Lambda$, satisfies
	\begin{align}
	\Lambda &\subset \{\lambda \in \mathbb{R}: |\lambda-a_0^2 -1|\leq 2|a_0|\} \cup \{\lambda \in \mathbb{R}: |\lambda-a_0^2 -1|\leq |a_0|\} \notag \\
	&=\{\lambda \in \mathbb{R}: |\lambda-a_0^2 -1|\leq 2|a_0|\}, \notag 
	\end{align}
	which implies that each eigenvalue $\lambda_k$ of $\sigma^2\bar{\mathbf{R}}_{N-1}^{-1}+a_0^2 \bm{\eta}_{N-1}\bm{\eta}_{N-1}^\top$ must satisfy
	\begin{equation}
	-2|a_0|\leq \lambda_k-a_0^2 -1\leq 2|a_0| \quad \iff \quad 0<(|a_0|-1)^2\leq \lambda_k \leq (|a_0|+1)^2.  \notag
	\end{equation}
We note that similar results regarding inverses of covariance matrices have been studied before; we refer to \cite{shaman1969inverse} for more details.
\end{proof}

\begin{fact}
Denote $\bar{\mathbf{T}}_{N}$ as the tridiagonal Toeplitz matrix of size $N\times N$ with non-zero diagonal elements $\{-a_0,a_0^2+1,-a_0\}$. Then, the following determinant equalities hold:
\begin{align}
\textnormal{det} (\bar{\mathbf{T}}_{N-1})-a_0^2\det (\bar{\mathbf{T}}_{N-2}) &= 1, \notag \\
\det (\bar{\mathbf{T}}_{N-1}+\varepsilon^2\mathbf{I}_{N-1})-a_0^2\det (\bar{\mathbf{T}}_{N-2}+\varepsilon^2\mathbf{I}_{N-2}) &=\frac{(\lambda_2-1)\lambda_1^{N}-(\lambda_1-1)\lambda_2^{N}}{\lambda_2-\lambda_1}, \notag
\end{align}
where 
\begin{equation}
\lambda_1 = \frac{a_0^2+\varepsilon^2+1}{2}-\frac{\sqrt{(a_0^2+\varepsilon^2+1)^2-4a_0^2}}{2}, \quad \lambda_2 = \frac{a_0^2+\varepsilon^2+1}{2}+\frac{\sqrt{(a_0^2+\varepsilon^2+1)^2-4a_0^2}}{2}. \notag
\end{equation}
\end{fact}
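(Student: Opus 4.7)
The plan is to reduce both determinant identities to solving a single two-term linear recurrence obtained from cofactor expansion of a tridiagonal Toeplitz matrix. Concretely, for a generic diagonal value $c$, let $D_k(c) := \det(c\,\mathbf{I}_k + \bar{\mathbf{T}}_k - (a_0^2+1)\mathbf{I}_k)$ denote the determinant of the $k \times k$ tridiagonal Toeplitz matrix with diagonal entries $c$ and sub/super-diagonal entries $-a_0$. Expanding along the last row (equivalently, invoking the standard continuant recursion from \cite{muir1960treatise}) gives
\begin{equation}
D_k(c) = c\,D_{k-1}(c) - a_0^2\,D_{k-2}(c), \qquad D_0(c) = 1, \quad D_1(c) = c. \notag
\end{equation}
Both statements of the fact are then instances of computing $D_{N-1}(c) - a_0^2 D_{N-2}(c)$ for an appropriate $c$.

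For the first identity I would set $c = a_0^2 + 1$, whose associated characteristic polynomial $x^2 - (a_0^2+1)x + a_0^2 = (x-1)(x-a_0^2)$ factors explicitly. Matching initial conditions yields the closed form $D_k(a_0^2+1) = (a_0^{2(k+1)} - 1)/(a_0^2 - 1)$. Substituting $k = N-1$ and $k = N-2$ and telescoping the numerators collapses the expression to $1$, as claimed. For the second identity I would set $c = a_0^2 + 1 + \varepsilon^2$; by definition the roots of the characteristic polynomial $x^2 - c\,x + a_0^2 = 0$ are exactly $\lambda_1, \lambda_2$, and Vieta's formulas give the crucial identities $\lambda_1 + \lambda_2 = a_0^2 + 1 + \varepsilon^2$ and $\lambda_1 \lambda_2 = a_0^2$. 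Solving the recurrence with $D_0 = 1$ and $D_1 = \lambda_1 + \lambda_2$ gives the compact form
\begin{equation}
D_k(a_0^2+1+\varepsilon^2) = \frac{\lambda_2^{k+1} - \lambda_1^{k+1}}{\lambda_2 - \lambda_1}. \notag
\end{equation}

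The main step is then to evaluate $D_{N-1} - a_0^2 D_{N-2}$ and, crucially, replace $a_0^2$ by $\lambda_1 \lambda_2$ inside the numerator. After this substitution one gets $\lambda_2^N - \lambda_1^N - \lambda_1\lambda_2^N + \lambda_2\lambda_1^N$, which regroups as $(\lambda_2 - 1)\lambda_1^N - (\lambda_1 - 1)\lambda_2^N$, matching the desired right-hand side. I expect the only subtle point to be keeping the symmetry under $\lambda_1 \leftrightarrow \lambda_2$ visible through the algebra and applying Vieta's identity $a_0^2 = \lambda_1\lambda_2$ at precisely the right moment; the rest is a routine manipulation. A brief sanity check in the limit $\varepsilon \to 0$ (where $\lambda_1 \to 1$, $\lambda_2 \to a_0^2$) should recover the first identity, providing a useful consistency verification before finalizing the proof.
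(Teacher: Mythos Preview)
Your proposal is correct and follows essentially the same approach as the paper: both rest on the tridiagonal continuant recurrence $D_k = c\,D_{k-1} - a_0^2 D_{k-2}$ (the paper's Proposition~2) and exploit the characteristic roots $\lambda_1,\lambda_2$ with Vieta's relations. The only cosmetic differences are that the paper telescopes the first identity directly from the recurrence without passing through the closed form, and for the second identity it casts the recursion as a $2\times 2$ matrix power and diagonalizes explicitly, whereas you write down the closed-form solution $D_k=(\lambda_2^{k+1}-\lambda_1^{k+1})/(\lambda_2-\lambda_1)$ at once; the resulting algebra is identical.
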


\begin{proof}
	For $\textnormal{det}(\bar{\mathbf{T}}_{N-1})$, via Proposition \ref{proposition2} in Section \ref{auxiliary}, we know that the following determinant relation holds:
	\begin{equation}
	\det(\bar{\mathbf{T}}_{N-1}) = (a_0^2+1)\det(\bar{\mathbf{T}}_{N-2})-a_0^2\det(\bar{\mathbf{T}}_{N-3}), \notag
	\end{equation}
	with $\det(\bar{T}_{1})=\bar{T}_{1}=a_0^2+1$, and $\det(\bar{\mathbf{T}}_2) = a_0^4+a_0^2+1$. So, we see that 
	\begin{equation}
	\textnormal{det} (\bar{\mathbf{T}}_{N-1})-a_0^2\det (\bar{\mathbf{T}}_{N-2}) = \textnormal{det} (\bar{\mathbf{T}}_{N-2})-a_0^2\det (\bar{\mathbf{T}}_{N-3}) = \dots = \textnormal{det} (\bar{\mathbf{T}}_2)-a_0^2\det (\bar{T}_1)=1. \notag
	\end{equation}
	For the second equality, we denote $\tilde{\mathbf{T}}_{N-1}:= \bar{\mathbf{T}}_{N-1}+\varepsilon^2\mathbf{I}_{N-1}$. Due to the tridiagonal structure, by using Proposition \ref{proposition2} again, we know that the determinants of interest satisfy
	\begin{equation}
	\det(\tilde{\mathbf{T}}_{N-1}) = (a_0^2+\varepsilon^2+1)\det(\tilde{\mathbf{T}}_{N-2})-a_0^2\det(\tilde{\mathbf{T}}_{N-3}), \notag
	\end{equation}
	with $\det(\tilde{T}_{1})=a_0^2+\varepsilon^2+1$, and $\det(\tilde{\mathbf{T}}_2) = (a_0^2+\varepsilon^2+1)^2-a_0^2$. So, we write
	\begin{equation}
	\begin{bmatrix}
	\det(\tilde{\mathbf{T}}_{N-2}) \\ \det(\tilde{\mathbf{T}}_{N-1})
	\end{bmatrix} = 
	\begin{bmatrix}
	0 & 1 \\
	-a_0^2 & a_0^2+\varepsilon^2+1 
	\end{bmatrix}\begin{bmatrix}
	\det(\tilde{\mathbf{T}}_{N-3}) \\ \det(\tilde{\mathbf{T}}_{N-2})
	\end{bmatrix} = 
	\begin{bmatrix}
	0 & 1 \\
	-a_0^2 & a_0^2+\varepsilon^2+1 
	\end{bmatrix}^{N-3}\begin{bmatrix}
	\det(\tilde{T}_{1}) \\ \det(\tilde{\mathbf{T}}_{2})
	\end{bmatrix}. \notag 
	\end{equation}
	For simplicity, we denote $\alpha:=-a_0^2$, $\beta = a_0^2+\varepsilon^2+1$. We are interested in
	\begin{align}
	\det(\tilde{T}_{1})-a_0^2 \det(\tilde{\mathbf{T}}_{2})&=\begin{bmatrix}
	\alpha & 1
	\end{bmatrix}\begin{bmatrix}
	0 & 1 \\
	\alpha & \beta 
	\end{bmatrix}^{N-3}\begin{bmatrix}
	\beta \\ \beta^2+\alpha 
	\end{bmatrix} \notag \\
	&=\frac{-1}{\alpha\sqrt{\beta^2+4\alpha}} \begin{bmatrix}
	\alpha & 1
	\end{bmatrix}\begin{bmatrix}
	\lambda_2 & \lambda_1 \\
	-\alpha & -\alpha 
	\end{bmatrix}\begin{bmatrix}
	\lambda_1^{N-3} & 0 \\
	0 & \lambda_2^{N-3} 
	\end{bmatrix}
	\begin{bmatrix}
	-\alpha & -\lambda_1 \\
	\alpha & \lambda_2 
	\end{bmatrix}\begin{bmatrix}
	\beta \\ \beta^2+\alpha 
	\end{bmatrix} \notag \\
	&=\frac{-1}{\sqrt{\beta^2+4\alpha}} \begin{bmatrix}
	\lambda_2-1 & \lambda_1-1
	\end{bmatrix}\begin{bmatrix}
	\lambda_1^{N-3} & 0 \\
	0 & \lambda_2^{N-3} 
	\end{bmatrix}
	\begin{bmatrix}
	-\alpha\beta-\lambda_1 \beta^2-\lambda_1 \alpha \\ \alpha\beta+\lambda_2 \beta^2+\lambda_2 \alpha 
	\end{bmatrix}. \notag 
	\end{align}
	By noting that $\beta^2 \lambda_{1,2}+\alpha \beta = \beta \lambda_{1,2}^2$, we obtain
	\begin{equation}
	\det(\tilde{T}_{1})-a_0^2 \det(\tilde{\mathbf{T}}_{2})=\frac{-1}{\sqrt{\beta^2+4\alpha}} \begin{bmatrix}
	\lambda_2-1 & \lambda_1-1
	\end{bmatrix}\begin{bmatrix}
	\lambda_1^{N-3} & 0 \\
	0 & \lambda_2^{N-3} 
	\end{bmatrix}
	\begin{bmatrix}
	-\lambda_1^2\beta -\lambda_1 \alpha \\ \lambda_2^2\beta+\lambda_2 \alpha 
	\end{bmatrix} =\frac{(\lambda_2-1)\lambda_1^{N}-(\lambda_1-1)\lambda_2^{N}}{\lambda_2-\lambda_1}  \tag*{\qedhere}
	\end{equation}
\end{proof}

\begin{fact}[Choices for $x^*$ and $z^*$] We seek for a computable value for $x^*$ such that the bound \eqref{almostbound}
\begin{equation}
\textnormal{var}\{\hat{a}_N\} \leq 2x^* + \frac{8}{\sqrt[4]{z^*-a_0^2}} \left( \frac{|a_0|^{3-\frac{N}{2}}}{N-6}-\frac{|a_0|^{1-\frac{N}{2}}}{N+2} \right), \notag
\end{equation}
is as tight as possible. Here, $z^*$ is computed as
\begin{equation}
z^* = \frac{1+a_0^2+x^*}{2}+ \frac{1}{2}\sqrt{(1-a_0^2+x^*)^2-4a_0^2}. \notag
\end{equation}
where $x^*>0$. An appropriate value for $x^*$ should be directly related to the change of regime in the probability bound \eqref{bound1}. Valid choices for $x^*$ and $z^*$ are
\begin{equation}
x^*=\frac{|a_0|^{4-\frac{N}{2m}}(N+4m)}{N}, \quad z^*=a_0^2+\frac{|a_0|^{4-\frac{N}{2m}}(N+4m)}{N}. \notag
\end{equation}
\end{fact}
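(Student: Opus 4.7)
My plan is to exploit the one-to-one correspondence between $x^*$ and $z^*$ encoded in the definition $z^* = \tfrac{1+a_0^2+x^*}{2} + \tfrac{1}{2}\sqrt{(1+a_0^2+x^*)^2 - 4a_0^2}$, and then optimize the resulting bound by balancing exponential rates in $|a_0|$. I would break the argument into three steps.

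\emph{Step 1: invert the substitution.} The definition of $z^*$ simply says that $z^*$ is the larger root of $\lambda^2 - (1+a_0^2+x^*)\lambda + a_0^2 = 0$; Vieta's formulas then give the clean inverse $x^* = (z^*-1)(z^*-a_0^2)/z^*$. Parametrizing by $K := z^* - a_0^2 > 0$ turns this into $x^* = (a_0^2 + K - 1)K/(a_0^2 + K)$, from which the strict inequality $x^* < K$ follows immediately (since $a_0^2 + K - 1 < a_0^2 + K$). Substituting $z^* - a_0^2 = K$ and the bound $x^* \le K$ into \eqref{almostbound} reduces the task to minimizing
\begin{equation*}
2K + \frac{8}{K^{1/4}}\left(\frac{|a_0|^{3-N/2}}{N-6} - \frac{|a_0|^{1-N/2}}{N+2}\right)
\end{equation*}
over $K > 0$.

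\emph{Step 2: locate $K$ at the change-of-regime.} To pin down the correct scale of $K$, I would invoke the heuristic from the main text: take $K$ so that $z^* = \lambda_2$ sits at the value where the nontrivial branch of \eqref{bound1} first drops below $1$. Using $\lambda_1\lambda_2 = a_0^2$, the condition $\lambda_2^{-N/4}\bigl((\lambda_2-\lambda_1)/(1-\lambda_1)\bigr)^m = 1$ rewrites as $z^{*N/(4m)} = (z^{*2} - a_0^2)/(z^* - a_0^2)$. For $K \ll a_0^2$ the right-hand side behaves like $a_0^4/K$, giving the leading-order scale $K \sim |a_0|^{4 - N/(2m)}$. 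The specific ansatz $K = |a_0|^{4-N/(2m)}(N+4m)/N$ retains the factor $(N+4m)/N$ that combines cleanly with the $K^{-1/4}$ in front to produce the prefactor appearing in \eqref{varunstable1}.

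\emph{Step 3: balance exponents and pick $m$.} Substituting this $K$ into the reduced bound from Step 1, the three summands carry $|a_0|$-exponents $4 - N/(2m)$, $\,2 - N/2 + N/(8m)$, and $-N/2 + N/(8m)$; the last two already share the same $N$-dependent part, so it suffices to align the first summand with them. Setting $-1/(2m) = -1/2 + 1/(8m)$ gives $5/(8m) = 1/2$, hence $m = 5/4$. At this value every exponent's $N$-dependent part collapses to $-2N/5$, with constant offsets $4$, $2$, and $0$ respectively; factoring $|a_0|^{-2N/5}$ out of the bound and using $N + 4m = N + 5$ yields exactly \eqref{varunstable1}.

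The main obstacle is making Step 2 rigorous: the equation $z^{*N/(4m)} = (z^{*2}-a_0^2)/(z^*-a_0^2)$ is transcendental and the ansatz is only a leading-order solution. The cleanest way around this is to treat $K$ (equivalently $x^*$) as a design parameter: for any positive $x^*$ the bound \eqref{almostbound} is valid, so there is no need to land exactly on the change-of-regime value. The specific choice above is then just the one that minimizes the resulting $|a_0|$-exponent while keeping a tractable closed form, and the heuristic merely tells us where to look. Once $K$ is fixed as a design choice, the remaining steps are pure algebra.
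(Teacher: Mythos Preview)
Your proposal is correct and, in one respect, cleaner than the paper's argument. The key observation you make explicitly---that \eqref{almostbound} holds for \emph{any} $x^*>0$, so $K=z^*-a_0^2$ can simply be treated as a free design parameter without ever rigorously locating the crossover---is exactly right, and it lets you bypass the most technical part of the paper's proof. The paper takes a more constructive route: it writes the change-of-regime condition as $f(z)=z^{N/(4m)+1}-a_0^2 z^{N/(4m)}-z^2+a_0^2=0$, substitutes $z=a_0^2+\xi$, lower-bounds $f(a_0^2+\xi)$ by a quadratic in $\xi$ (using the first two terms of the binomial expansion of $(a_0^2+\xi)^{N/(4m)}$), solves that quadratic explicitly, and then simplifies the root. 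This is where the factor $(N+4m)/N$ multiplying $|a_0|^{4-N/(2m)}$ is actually \emph{produced}; in your write-up it is instead inserted as part of the ansatz (``retains the factor $(N+4m)/N$ that combines cleanly\ldots''). Logically this is fine---any positive $K$ yields a valid bound, so one may reverse-engineer the constant---but the paper's route explains why this particular constant arises. The trade-off: your argument is shorter and makes transparent which steps are essential (only your Step~1 and Step~3 carry weight once $K$ is declared), while the paper's derivation is constructive and motivates the specific form of $\xi^*$ from the transcendental equation itself.
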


\begin{proof}
We are interested in the value of $\varepsilon$ when the following equality holds
	\begin{equation}
	\frac{\lambda_2-\lambda_1}{(1-\lambda_1)\lambda_2^{\frac{N}{4m}}} = 1. \notag
	\end{equation}
	If we denote $\lambda_2=z$, we have $\lambda_1 = a_0^2/z$, and hence the equality can be written as
	\begin{equation}
	z^{\frac{N}{4m}+1}-a_0^2z^{\frac{N}{4m}}-z^2+a_0^2=0. \notag
	\end{equation}
	Since $\lambda_1<1$, we require that $z>a_0^2$. Hence, we propose $z=a_0^2+\xi$, where $\xi>0$. The goal is to find an appropriate $\xi$ now. If we write $f(z):=z^{\frac{N}{4m}+1}-a_0^2z^{\frac{N}{4m}}-z^2+a_0^2$, we see that $f(a_0^2)<0$. So, an appropriate $\xi$ would satisfy $f(a_0^2+\xi)>0$. For this, it is sufficient that
	\begin{align}
	f(a_0^2+\xi) &= \xi(a_0^2+\xi)^{\frac{N}{4m}}-(a_0^4+2a_0^2 \xi + \xi^2 - a_0^2)  \notag \\
	&> \xi\left(|a_0|^{\frac{N}{2m}}+\frac{N}{4m}|a_0|^{2(\frac{N}{4m}-1)}\xi\right)-\xi^2-2a_0^2 \xi+a_0^2-a_0^4 \notag \\
	&=0. \notag
	\end{align}
	This is simply a quadratic equation in $\xi$. So, we can solve for $\xi$:
	\begin{align}
	\xi &= \frac{2a_0^2-|a_0|^{\frac{N}{2m}}+\sqrt{|a_0|^{\frac{N}{m}}-4|a_0|^{\frac{N}{2m}+2}-\frac{N}{m} |a_0|^{\frac{N}{2m}}+\frac{N}{m}|a_0|^{\frac{N}{2m}+2}+4a_0^2}}{2\left( \frac{N}{4m}|a_0|^{\frac{N}{2m}-2}-1 \right)}\notag \\
	&<\frac{2a_0^2-|a_0|^{\frac{N}{2m}}+\sqrt{|a_0|^{\frac{N}{m}}+\frac{N}{m}|a_0|^{\frac{N}{2m}+2}}}{\frac{N}{2m}|a_0|^{\frac{N}{2m}-2}}\notag \\
	&<\frac{4m|a_0|^{4-\frac{N}{2m}}}{N}-\frac{2m a_0^2}{N}+ \frac{2m a_0^2}{N} +|a_0|^{4-\frac{N}{2m}} \notag\\
	&=\frac{|a_0|^{4-\frac{N}{2m}}(N+4m)}{N}.\notag
	\end{align}
	So, thanks to the substitutions we have made in the integrals that lead to \eqref{almostbound}, we can see that a choice for $z^*$ can actually be 
	\begin{equation}
	z^*=a_0^2+\frac{|a_0|^{4-\frac{N}{2m}}(N+4m)}{N}, \quad m \geq \frac{1}{4}. \notag
	\end{equation}
	Finally, we proceed to obtain a bound on $x^*$. Here, we denote $\xi^*:=z^*-a_0^2$. Note that the relationship between $z^*$ and $x^*$ is given by
	\begin{equation}
	z^* = \frac{1+a_0^2+x^*}{2} + \sqrt{\left( \frac{1+a_0^2+x^*}{2} \right)^2-a_0^2} \iff \frac{{z^*}^2+a_0^2}{2z^*} = \frac{1+a_0^2+x^*}{2}, \notag
	\end{equation}
	from which we conclude that
	\begin{equation}
	x^*=z^*+\frac{a_0^2}{z^*}-1-a_0^2 =\xi^*+\frac{a_0^2}{a_0^2+\xi^*}-1 =\xi^*\left(1-\frac{1}{a_0^2+\xi^*}\right)>0. \notag
	\end{equation}
	For simplicity, we can bound $x^*$ by $\xi^*$.
\end{proof}

\subsection{Auxiliary results}
\label{auxiliary}
\begin{proposition}[Determinant of tridiagonal matrices]
	\label{proposition2}
Consider the sequence of Toeplitz tridiagonal matrices $\{\mathbf{T}_N\}$ of increasing size $N$ and non-diagonal elements $\{\alpha,\beta,\gamma\}$. Then,
\begin{equation}
\label{tridiagproof}
\det(\mathbf{T}_N) = \beta \det(\mathbf{T}_{N-1})-\alpha \gamma \det(\mathbf{T}_{N-2}).
\end{equation}	
\end{proposition}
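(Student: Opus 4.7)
The plan is to establish \eqref{tridiagproof} by a double cofactor expansion along last rows and columns. First I would expand $\det(\mathbf{T}_N)$ along its last row: because the matrix is tridiagonal, this row has only two nonzero entries, namely $\alpha$ at position $(N,N-1)$ and $\beta$ at position $(N,N)$. Laplace's rule then yields
$$\det(\mathbf{T}_N) = (-1)^{2N-1}\alpha\, M_{N,N-1} + (-1)^{2N}\beta\, M_{N,N},$$
where $M_{N,j}$ denotes the minor obtained by deleting row $N$ and column $j$ of $\mathbf{T}_N$. The minor $M_{N,N}$ is the determinant of the leading $(N-1)\times(N-1)$ principal block, which is precisely $\det(\mathbf{T}_{N-1})$, so that term already contributes $\beta\det(\mathbf{T}_{N-1})$.

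The second step is to evaluate $M_{N,N-1}$. After deleting row $N$ and column $N-1$ from $\mathbf{T}_N$, the resulting $(N-1)\times(N-1)$ matrix retains original column $N$ as its new last column; by tridiagonality, this column has a single nonzero entry, namely the superdiagonal element $\gamma$ sitting at what is now row $N-1$. Moreover, the upper-left $(N-2)\times(N-2)$ block of this minor is exactly $\mathbf{T}_{N-2}$, since it is untouched by the deletions. A second cofactor expansion, this time along the new last column, therefore gives $M_{N,N-1} = (-1)^{2(N-1)}\gamma\det(\mathbf{T}_{N-2}) = \gamma\det(\mathbf{T}_{N-2})$. Substituting back yields $\det(\mathbf{T}_N) = \beta\det(\mathbf{T}_{N-1}) - \alpha\gamma\det(\mathbf{T}_{N-2})$, as claimed.

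The only real obstacle is careful sign bookkeeping across the two nested expansions; no property beyond tridiagonality is actually needed, so the Toeplitz assumption is incidental and the same recursion would hold for any (non-Toeplitz) tridiagonal matrix with $\alpha$, $\beta$, $\gamma$ replaced by the appropriate entries at each step. An alternative derivation by induction on $N$, anchored at the trivial cases $N=1,2$, would follow the same decomposition and reach the same conclusion.
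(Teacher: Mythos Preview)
Your proof is correct and follows essentially the same approach as the paper: a double Laplace expansion, first along an outer row to peel off $\beta\det(\mathbf{T}_{N-1})$, then along the single-entry column of the remaining minor to get $\alpha\gamma\det(\mathbf{T}_{N-2})$. The only cosmetic difference is that the paper expands from the top-left corner (first row, then first column of the minor $\mathbf{S}_{N-1}$) whereas you expand from the bottom-right; the sign bookkeeping you carry out is accurate, and your remark that the Toeplitz hypothesis is incidental is also correct.
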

\begin{proof}
	By Laplace's expansion formula, we find that $\det(\mathbf{T}_N) = \beta \det(\mathbf{T}_{N-1})-\gamma \det(\mathbf{S}_{N-1})$, where
	\begin{equation}
	\mathbf{S}_{N-1} = \begin{bmatrix}
	\alpha & \gamma & 0 & &  0\\
	0 & \beta & \gamma & &  \\
	& \alpha & \ddots & \ddots & \\
	&  & \ddots & \beta & \gamma \\
	0&  &  & \alpha & \beta 
	\end{bmatrix} \in \mathbb{R}^{(N-1)\times(N-1)}. \notag
	\end{equation}
	Clearly, $\det(\mathbf{S}_{N-1}) = \alpha \det(\mathbf{T}_{N-2})$, and hence \eqref{tridiagproof} follows.
\end{proof}

\begin{proposition}[Quotients of determinants of symmetric Toeplitz matrices]
	\label{proposition3}
Consider the sequence $\{\mathbf{T}_n\}$ of symmetric Toeplitz matrices, each one with elements $\{t_i\}_{i=0}^{n-1}$. Then, for $N>1$,
\begin{equation}
\label{quotientsdet}
\frac{\det(\mathbf{T}_{N+1})}{\det(\mathbf{T}_N)} \leq \frac{\det(\mathbf{T}_{N})}{\det(\mathbf{T}_{N-1})}.
\end{equation} 
\end{proposition}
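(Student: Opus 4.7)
The plan is to interpret both sides of \eqref{quotientsdet} as minimum mean-square one-step linear prediction errors, and then exploit the Toeplitz (stationarity) structure to conclude that adding one more observation cannot increase this error.

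First, I partition $\mathbf{T}_{N+1}$ as the block matrix
\begin{equation}
\mathbf{T}_{N+1} = \begin{bmatrix} \mathbf{T}_N & \mathbf{r}_N \\ \mathbf{r}_N^\top & t_0 \end{bmatrix}, \qquad \mathbf{r}_N := \begin{bmatrix} t_N & t_{N-1} & \cdots & t_1 \end{bmatrix}^\top, \notag
\end{equation}
and apply the Schur complement determinant formula to obtain
\begin{equation}
\frac{\det(\mathbf{T}_{N+1})}{\det(\mathbf{T}_N)} = t_0 - \mathbf{r}_N^\top \mathbf{T}_N^{-1} \mathbf{r}_N, \notag
\end{equation}
and analogously
\begin{equation}
\frac{\det(\mathbf{T}_N)}{\det(\mathbf{T}_{N-1})} = t_0 - \mathbf{r}_{N-1}^\top \mathbf{T}_{N-1}^{-1} \mathbf{r}_{N-1}. \notag
\end{equation}
Assuming (as is standard) $\mathbf{T}_{N+1}\succ 0$ so that $\{t_i\}$ arises as the covariance of a stationary zero-mean process $\{z_k\}$, the right-hand sides above are, respectively, the minimum variances of $z_{N+1} - \hat z_{N+1|1:N}$ and $z_N - \hat z_{N|1:N-1}$, where $\hat z_{N+1|1:N}$ denotes the linear MMSE predictor of $z_{N+1}$ based on $(z_1,\dots,z_N)$, and similarly for $\hat z_{N|1:N-1}$.

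Next I invoke stationarity, which is precisely what the Toeplitz structure encodes: the joint distribution of $(z_2,\dots,z_N,z_{N+1})$ coincides with that of $(z_1,\dots,z_{N-1},z_N)$, so the MMSE of predicting $z_{N+1}$ from the $N-1$ samples $(z_2,\dots,z_N)$ equals $t_0 - \mathbf{r}_{N-1}^\top \mathbf{T}_{N-1}^{-1}\mathbf{r}_{N-1}$. Hence \eqref{quotientsdet} reduces to the assertion that the MMSE of predicting $z_{N+1}$ from $(z_1,\dots,z_N)$ is no larger than the MMSE from $(z_2,\dots,z_N)$.

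The final step is an elementary monotonicity argument for linear MMSE: since $\{z_2,\dots,z_N\}\subset \{z_1,\dots,z_N\}$, any predictor of the form $\sum_{k=2}^N \alpha_k z_k$ is also of the form $\sum_{k=1}^N \alpha_k z_k$ (taking $\alpha_1=0$), so the minimum over the larger class cannot exceed the minimum over the smaller one. This yields the desired inequality. The main subtlety — and the only non-mechanical step — is justifying the positive-definiteness needed to pass to a stationary-process interpretation; if one prefers a purely algebraic route, the same monotonicity can be obtained directly by noting that $\mathbf{r}_N^\top \mathbf{T}_N^{-1}\mathbf{r}_N \ge \mathbf{r}_{N-1}^\top \mathbf{T}_{N-1}^{-1}\mathbf{r}_{N-1}$ whenever $\mathbf{T}_N$ is a positive (semi)definite extension of $\mathbf{T}_{N-1}$ obtained by bordering with a row/column consistent with the Toeplitz pattern, which is a standard consequence of the quadratic-form variational characterization of the Schur complement.
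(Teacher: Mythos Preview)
Your proposal is correct and follows essentially the same approach as the paper: both interpret the determinant ratios via the Schur complement as minimum one-step linear prediction error variances for a stationary process with covariance sequence $\{t_i\}$, and then invoke the monotonicity of the MMSE in the number of regressors. Your version is slightly more explicit about the stationarity step and about the positive-definiteness assumption needed to pass to a process interpretation, which the paper leaves implicit.
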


\begin{proof}
	Consider the weakly stationary process $\{X_i\}$, where $X_i$ is a zero-mean Gaussian random variable. Furthermore, assume that the process exhibits covariance $\mathbb{E}[X_k X_j]=t_{k-j}$. Let us solve the problem of finding the best linear predictor of $X_N$ based on $N$ previous values of the random process $\{X_i\}$:
	\begin{equation}
	\hat{X}_N = \sum_{i=1}^N a_i X_{N-i}. \notag
	\end{equation}
	If the cost is $\mathbb{E}[(X_N-\hat{X}_N)^2]$, then it is well known (see, e.g. \cite{kay1993fundamentals}) that the optimal predictor is in fact
	\begin{equation}
	\hat{X}_N^{\textnormal{opt}} = \mathbb{E}[X_N|X_0,\dots,X_{N-1}] = \mathbf{K}_{X_N \mathbf{X}^{N-1}}\mathbf{K}_{\mathbf{X}^{N-1}}^{-1} \mathbf{X}^{N-1}, \notag
	\end{equation}
	where
	\begin{equation}
	\mathbf{K}_{X_N \mathbf{X}^{N-1}} = \begin{bmatrix}
	t_1 & t_2 & \dots & t_{N}
	\end{bmatrix}, \quad \mathbf{K}_{\mathbf{X}^{N-1}}= \begin{bmatrix}
	t_0 & t_1 & \cdots & t_{N-1} \\
	t_1 & t_0 & \cdots & t_{N-2} \\
	\vdots & \vdots & \ddots & \vdots \\
	t_{N-1} & t_{N-2} & \cdots & t_0
	\end{bmatrix}, \quad \mathbf{X}^{N-1}= \begin{bmatrix}
	X_{N-1}\\ X_{N-2} \\ \vdots \\ X_0
	\end{bmatrix}. \notag
	\end{equation}
	Note that $\mathbf{K}_{\mathbf{X}^{N-1}} = \mathbf{T}_{N-1}$. Hence, the optimal cost is given by
	\begin{equation}
	\label{increasingcomplex}
	\mathbb{E}[(X_N-\hat{X}_N^{\textnormal{opt}})^2] = t_0 - \mathbf{K}_{X_N \mathbf{X}^{N-1}}\mathbf{K}_{\mathbf{X}^{N-1}}^{-1}\mathbf{K}_{\mathbf{X}^{N-1} X_N} = \frac{\det(\mathbf{K}_{\mathbf{X}^N})}{\det(\mathbf{K}_{\mathbf{X}^{N-1}})} = \frac{\det(\mathbf{T}_{N})}{\det(\mathbf{T}_{N-1})}. 
	\end{equation}
	Now, if we find the best linear predictor based on $N+1$ previous values, following the same computations we reach the optimal cost $\det(\mathbf{T}_{N+1})/\det(\mathbf{T}_{N})$. This quantity cannot be greater than \eqref{increasingcomplex}, due to the increasing complexity of the predictor. Thus, \eqref{quotientsdet} follows.
\end{proof}

\end{document}